\DeclareMathOperator{\id}{id}
\DeclareMathOperator{\supp}{supp}
\DeclareMathOperator{\gr}{Gr}
\newtheorem{theorem}{Theorem}[section]
\newtheorem{lemma}[theorem]{Lemma}
\newtheorem{proposition}[theorem]{Proposition}
\numberwithin{equation}{section}
\title{Improved decay of conical averages of the Fourier transform}
\author[Terence L.~J.~Harris]{Terence L.~J.~Harris}
\address{Department of Mathematics, University of Illinois, Urbana, IL 61801, U.S.A.}
\email{terence2@illinois.edu}
\subjclass[2010]{42B37; 42B10}
\keywords{Wave equation, decoupling}
\thanks{This material is based upon work partially supported by the National Science Foundation under Grant No. DMS-1501041.
The author would like to thank Burak Erdoğan for suggesting this problem and for advice on this topic, and for financial support.}
\begin{document}

\begin{abstract} An improved lower bound is given for the decay of conical averages of Fourier transforms of measures, for cones of dimension $d \geq 4$.  The proof uses a weighted version of the broad restriction inequality, a narrow decoupling inequality for the cone, and some techniques of Du and Zhang originally developed for the Schrödinger equation. 
\end{abstract}

\maketitle

\section{Introduction} 

The decay of the Fourier transform over submanifolds of Euclidean space is tied to various problems in geometric measure theory and partial differential equations. Averages over the sphere are connected to Falconer's distance set conjecture \cite{matilla,liu}, whilst the conical averages are equivalent to $L^2$ fractal Strichartz inequalities for the wave equation \cite{wolff,cho}, and have also been applied to Marstrand-type theorems for restricted families of projections \cite{oberlin3}, see \cite{iosevich} for a broad overview. The focus of this work is to improve the known decay rates for averages over the cone. 

For $d \geq 2$, let $\beta\big(\alpha, \Gamma^d \big)$ be the supremum over all $\beta \geq 0$ satisfying
\[ \int {\left|\widehat{\mu}(R\xi)\right|}^2 \, d\sigma_{\Gamma}(\xi) \lesssim_{\beta} \|\mu\| c_{\alpha}(\mu) R^{-\beta} \quad \text{for all $R>0$.} \]
Here $\|\mu\|$ is the total variation norm, $\sigma_{\Gamma}$ is the surface measure on the truncated cone 
\[ \Gamma^d := \{ (\xi,|\xi|) \in \mathbb{R}^{d+1}: 1 \leq |\xi| \leq 2 \},\]
and the Fourier transform of $\mu$ is 
\[ \widehat{\mu}(\xi) := \int e^{-2\pi i \langle x, \xi \rangle} \, d\mu(x). \]
The inequalities are assumed to be uniform over all $\alpha$-dimensional Borel measures $\mu$ supported in the unit ball of $\mathbb{R}^{d+1}$, where $\alpha$-dimensional means that 
\[ c_{\alpha}(\mu) := \sup_{ \substack{ x \in \mathbb{R}^{d+1}  \\ r >0} } \frac{ \mu (B(x,r) ) }{r^{\alpha}} < \infty. \]

Using some of the techniques from \cite{zhang}, the best known lower bound for $\beta\big(\alpha, \Gamma^d \big)$ will be improved here for $d \geq 4$. The exact value of $\beta\big(\alpha,\Gamma^2\big)$ was determined by Erdoğan \cite{erdogan2}. For $d \geq 3$, Cho, Ham and Lee \cite{cho} proved that
\begin{equation}  \label{cho} \beta\big(\alpha, \Gamma^d \big) \geq \begin{cases} \alpha &\text{ if } 0 < \alpha \leq \frac{d-1}{2} \\
  \frac{d}{4} + \frac{\alpha}{2} - \frac{1}{4}  &\text{ if } \frac{d-1}{2} < \alpha \leq \frac{d+3}{2}\\
\alpha-1 &\text{ if } \frac{d+3}{2} < \alpha \leq d+1, \end{cases} \end{equation}
which gives the exact value of $\beta\big(\alpha,\Gamma^3\big)$ in the case $d=3$. For large $\alpha$ and $d$, \eqref{cho} was further improved in \cite{me}. The main result of this work is the lower bound
\begin{equation} \label{betabound2} \beta\big(\alpha, \Gamma^d \big) \geq \alpha - 1 + \frac{d-\alpha}{d-1} \quad \text{for} \quad \frac{d+1}{2} < \alpha < d. \end{equation}
This is probably not sharp for $d \geq 4$, but it supersedes the lower bound from \cite{me} (the exact value $\beta\big(\alpha, \Gamma^d \big) = \alpha-1$ for $\alpha \in [d,d+1]$ is known in every dimension \cite{sjolin,cho}). For $\alpha \in \left( \frac{d+1}{2}, d\right)$, \eqref{betabound2} improves \eqref{cho} for $d \geq 4$ and ties \eqref{cho} if $d=3$.

For $\alpha \in \left( \frac{d}{2},d\right)$, the lower bound in \eqref{betabound2} is larger than $\frac{\alpha(d-1)}{d}$, which is the current best known lower bound for the spherical averages over the sphere of one dimension less, due to Du and Zhang \cite{zhang}. The Fourier analytic properties of the cone and the sphere/paraboloid of one dimension less are generally expected to be similar, see \cite{taoparkcity}. The proof of \eqref{betabound2} given here largely follows that of Du and Zhang for the sphere, but there are two significant differences. A weighted version of the $d$-broad restriction inequality of Ou and Wang \cite{ou} is used instead of the Bennett-Carbery-Tao $d$-linear restriction inequality (or equivalently the $d$-linear refined Strichartz inequality), which is better in the case of the cone. To make use of this requires a narrow decoupling inequality for the cone, the proof of which is slightly more involved than in the sphere/paraboloid case, since a plane through the origin may intersect the cone tangentially.

\subsection{Notation} 

Let $A: \mathbb{R}^{d+1} \to \mathbb{R}^{d+1}$ be the unitary defined through the standard basis by 
\[ e_i \mapsto e_i \text{ for $1 \leq i \leq d-1$,} \quad \frac{e_{d+1}+e_d}{\sqrt{2}} \mapsto e_d, \quad  \frac{e_{d+1}-e_d}{\sqrt{2}} \mapsto e_{d+1}. \]
The letter $E$ will usually denote the extension operator for the truncated cone $\Gamma$, given by
\[ Ef(x,t) = \int_{B(0,2) \setminus B(0,1)} e^{2\pi i \left(\langle \xi, x \rangle + |\xi|t \right) } f(\xi) \, d\xi. \]
The statement that $F$ is essentially supported in $U$ will mean that $|F(x)| \lesssim_N R^{-N}$ for $x \notin U$, for arbitrarily large $N$. The open $\epsilon$-neighbourhood of the set $U$ will be denoted by $\mathcal{N}_{\epsilon}(U)$. For a box (resp. ball) $B$, the set $CB$ will be the similar set with the same centre, but with side lengths (resp. radius) scaled by $C$.

\section{Narrow decoupling for the cone}
\label{narrowdecoupling}
To state the decoupling theorem for the truncated cone from \cite{demeter}, the set
\[ \Gamma = \Gamma^d = \{ (\xi,|\xi|) \in \mathbb{R}^{d+1}: 1 \leq |\xi| \leq 2 \},\]
is partitioned into \emph{caps at scale $K^{-1}$} of the form
\[ \tau = \{ (\xi,|\xi|) \in \mathbb{R}^{d+1} : \xi/|\xi| \in C, \quad 1 \leq |\xi| \leq 2 \}, \]
where $K$ is a large parameter, and the sets $C$ are spherical caps in $S^{d-1}$ of diameter $K^{-1}$, which partition the sphere. 

\begin{theorem}[{\cite[Theorem 1.2]{demeter}}] \label{decouplestandard} If $F = \sum_{\tau} F_{\tau}$ is a sum over disjoint caps in $\Gamma^d$ at scale $K^{-1}$, such that each $F_{\tau}$ has Fourier transform supported in the $K^{-2}$-neighbourhood of $\tau$, then for any $\epsilon >0$, 
\[ \|F\|_q \leq C_{\epsilon} K^{\epsilon} \left( \sum_{\tau} \|F_{\tau} \|_q^2 \right)^{1/2}, \quad q = \frac{2(d+1)}{d-1}. \]
\end{theorem}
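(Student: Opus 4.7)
The theorem is the Bourgain--Demeter $l^2$ decoupling inequality for the cone at the critical exponent $q = \frac{2(d+1)}{d-1}$, as proved in \cite{demeter}. My plan is to follow their induction-on-scales scheme, which I sketch below. Let $\mathrm{Dec}(K)$ denote the smallest admissible constant in the stated inequality, so that the goal becomes showing $\mathrm{Dec}(K) \lesssim_\epsilon K^\epsilon$ for every $\epsilon > 0$. The argument will proceed by self-improving an a priori bound on $\mathrm{Dec}(K)$ using a combination of multilinear estimates, lower-dimensional decoupling, and parabolic rescaling.

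The first main step is a Bourgain--Guth broad/narrow dichotomy at an intermediate scale $K_0 < K$. At each spatial point, the $L^q$ contribution of $F$ is either concentrated on a few caps clustered inside a low-dimensional affine subspace (the narrow case), or it is spread transversally across $d$ caps of scale $K_0^{-1}$ (the broad case). The narrow case is handled by parabolic rescaling, which maps a narrow slab of caps back to a configuration of caps at a coarser scale and invokes the inductive hypothesis on $\mathrm{Dec}$. The broad case is handled by a $d$-linear estimate derived from the Bennett--Carbery--Tao multilinear restriction (equivalently, multilinear Kakeya) inequality, applied to the wave-packet decomposition of $F$ at scale $K^2$.

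The cone-specific feature, which is not present in the paraboloid version of the argument, is that each cap $\tau$ has a long null direction: the $K^{-2}$-neighborhood of $\tau$ is an approximate plate of dimensions $1 \times K^{-1} \times \cdots \times K^{-1} \times K^{-2}$, with $d-1$ short tangential directions and one long generator direction. Transverse to the generators, the cone locally resembles a $(d-1)$-dimensional paraboloid $P^{d-1}$, whose critical $l^2$ decoupling exponent is also $\frac{2(d+1)}{d-1}$. The key additional step beyond the paraboloid argument is therefore to apply $l^2$ decoupling for $P^{d-1}$ on slices transverse to the null direction and then reassemble; this uses the trivial (lossless) $L^2$ decoupling along the long direction after appropriate localization.

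The main obstacle is combining the broad/multilinear estimate, the lower-dimensional decoupling for $P^{d-1}$, and the inductive hypothesis so that the self-improvement of $\mathrm{Dec}(K)$ closes with only a $K^\epsilon$ loss at the endpoint $q = \frac{2(d+1)}{d-1}$. This demands careful bookkeeping of the three scales $K_0$, $K$, $K^2$ that enter the iteration, together with a delicate interpolation between the multilinear output and the single-cap input; this is the technical heart of the argument in \cite{demeter}, but once organised, the induction closes and yields the stated inequality.
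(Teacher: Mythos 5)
The paper does not prove this theorem; it quotes it as Theorem 1.2 of Bourgain and Demeter \cite{demeter} and uses it as a black box. So there is no argument in the paper's own text against which to compare your sketch.

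As a sketch of the Bourgain--Demeter proof, your outline conflates two distinct layers of their argument. The broad/narrow (Bourgain--Guth) dichotomy, the Bennett--Carbery--Tao multilinear input, and the self-improving induction on the decoupling constant are the engine that proves the \emph{paraboloid} decoupling theorem in \cite{demeter}. The cone decoupling is then derived from the paraboloid result by a comparatively short reduction: a Lorentz rescaling and linear change of variables take the $K^{-2}$-neighbourhood of a conic sector to a neighbourhood of a cylinder over a $(d-1)$-dimensional paraboloid, after which Fubini and Minkowski's inequality reduce the cone statement to paraboloid decoupling applied slice-by-slice transverse to the flat null direction of each plate, with trivial $L^2$ orthogonality handling the long direction. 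You correctly identify the slicing idea and the coincidence of the critical exponent $\frac{2(d+1)}{d-1}$ with that of $P^{d-1}$, but you embed it inside a fresh broad/narrow induction for the cone, which is not how the cited proof is organised: the heavy machinery lives entirely in the paraboloid case, and the cone case is essentially a corollary. Nothing you write is false, but a reader following your plan would needlessly redo the full induction-on-scales argument for the cone when the paraboloid theorem plus the slicing reduction already suffices, and the plan as written does not accurately describe the structure of the proof it claims to reproduce.
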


The narrow decoupling inequality asserts that if $d \geq 3$ and the caps each have at least one point with unit normal in a $K^{-2}$-neighbourhood of a $(d-1)$-dimensional vector space, the exponent can be increased to $\frac{2(d-1)}{d-3}$, corresponding to two dimensions less. The proof uses the same idea as that of narrow decoupling for the paraboloid \cite[Proposition 5.5]{demeter} (see also \cite[Section 4]{guthlecture} for a more detailed proof), but there is an extra rescaling step needed to deal with the case where the vector space is almost tangent to the cone. This technicality requires the normals to be $K^{-2}$-close to the vector space, rather than $K^{-1}$ as in the case of the paraboloid. That $K^{-2}$ is sufficient is essentially a consequence of the following (straightforward) lemma.

\begin{lemma} \label{obvious} Let $2 \leq k \leq d$, $K \geq 1$, and suppose that $P \subseteq \mathbb{R}^d$ is a $(k-1)$-dimensional affine plane in $\mathbb{R}^d$, which has nonempty intersection with $S^{d-1}$. Then
\[ \mathcal{N}_{K^{-2}}(P) \cap S^{d-1} \subseteq \mathcal{N}_{C_dK^{-1}}\left(P \cap S^{d-1}\right), \]
where $C_d$ is a constant depending only on $d$. 
\end{lemma}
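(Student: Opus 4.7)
The plan is to parametrize $P = p_0 + V$, where $p_0$ is the foot of the perpendicular from the origin to $P$ and $V$ is the $(k-1)$-dimensional linear subspace parallel to $P$. Then $p_0 \perp V$ and $|p_0| \leq 1$ since $P$ meets $S^{d-1}$. Setting $r := \sqrt{1 - |p_0|^2}$, the intersection $P \cap S^{d-1}$ is a $(k-2)$-sphere of radius $r$ in $P$ centered at $p_0$ (a single point if $r = 0$).

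Given $x \in \mathcal{N}_{K^{-2}}(P) \cap S^{d-1}$, decompose $x = \pi_V(x) + \pi_{V^\perp}(x)$. The nearest point of $P$ to $x$ is $y = p_0 + \pi_V(x)$, so the hypothesis reads $|\pi_{V^\perp}(x) - p_0| \leq K^{-2}$. Combined with $|x|^2 = 1$, this gives
\[ \big| |\pi_V(x)|^2 - r^2 \big| \;=\; \big| |p_0|^2 - |\pi_{V^\perp}(x)|^2 \big| \;\leq\; \bigl(|p_0| + |\pi_{V^\perp}(x)|\bigr) K^{-2} \;\leq\; 2K^{-2}. \]

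I would then split into two cases. If $r \geq 2K^{-1}$, the above estimate forces $|\pi_V(x)| \geq r/\sqrt{2} > 0$, so one may take $z = p_0 + r\,\pi_V(x)/|\pi_V(x)| \in P \cap S^{d-1}$; then $|y - z| = \big| |\pi_V(x)| - r \big| \leq 2K^{-2}/(|\pi_V(x)| + r) \lesssim K^{-1}$, and $|x - z| \leq |x-y| + |y-z| \lesssim K^{-1}$. If instead $r < 2K^{-1}$, the same estimate yields $|\pi_V(x)|^2 \lesssim K^{-2}$, whence $|x - p_0|^2 = |\pi_V(x)|^2 + |\pi_{V^\perp}(x) - p_0|^2 \lesssim K^{-2}$; choosing any $z \in P \cap S^{d-1}$ (which exists by hypothesis) gives $|z - p_0| = r \lesssim K^{-1}$ and the triangle inequality closes the argument.

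The only delicate point is the near-tangent case $r \ll 1$: a naive $K^{-1}$-neighbourhood of $P$ would meet the sphere in a set of diameter of order $K^{-1/2}$ rather than $K^{-1}$, since taking square roots in the displayed inequality loses a factor when $|\pi_V(x)| + r$ is small. The shrinking to $K^{-2}$ in the hypothesis is precisely what compensates for this square-root loss so that the conclusion still sits at scale $K^{-1}$; this is what makes the lemma sharp for the purpose of the narrow decoupling application, where a plane through the origin can graze the cone tangentially.
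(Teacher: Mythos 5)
Your proof is correct and follows essentially the same route as the paper: both reduce to comparing $|\pi_V(x)|$ with $r=\sqrt{1-|p_0|^2}$, bound the difference of squares by $O(K^{-2})$ using $|x|=1$ and the distance hypothesis, and then absorb the square-root loss into the $K^{-2}$ assumption. The only cosmetic difference is that you handle the near-tangent regime by a case split on $r$, whereas the paper uses the one-line inequality $(a-b)^2\leq|a^2-b^2|$ for $a,b\geq 0$ to avoid any case analysis.
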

\begin{proof} The plane $P$ is contained in a $k$-dimensional plane through the origin. After applying a unitary it may therefore be assumed that 
\[ P = \{(x',0) \in \mathbb{R}^k \times \mathbb{R}^{d-k}: \langle x', c \rangle = \lambda \}, \]
for some unit vector $c \in \mathbb{R}^k$ and $\lambda \in \mathbb{R}$. Then $|\lambda | \leq 1$ since $P$ intersects $S^{d-1}$. Let $x \in \mathcal{N}_{K^{-2}}(P) \cap S^{d-1}$, and write $x = (x', x'')$. Then 
\[ |\langle x', c \rangle - \lambda | \leq K^{-2}, \quad |x''| \leq K^{-2}. \] 
Write $x' = (\lambda + O(K^{-2}))c + \mu c^{\perp}$, where $\mu \in \mathbb{R}$, $c^{\perp} \in \mathbb{R}^k$  is a unit vector orthogonal to $c$, and 
\[ \lambda^2 + \mu^2 = 1+O(K^{-2}). \]
Assume $\mu \geq 0$ without loss of generality. Let $y'= \lambda c + \sqrt{1-\lambda^2}c^{\perp}$, so that
\begin{align*} |x'-y'|^2 &= O(K^{-4}) + \left(\mu - \sqrt{1-\lambda^2}\right)^2  \\
&\leq O(K^{-4}) + C\left|\mu^2 -(1-\lambda^2)\right| = O(K^{-2}). \end{align*}
By taking square roots and considering the point $y=(y',0) \in P \cap S^{d-1}$, this proves the lemma.
\end{proof}

\begin{theorem} \label{narrower} Let $d \geq 3$. Suppose that $F = \sum_{\tau} F_{\tau}$ is a sum over caps in $\Gamma^d$ at scale $K^{-1}$, with each $\widehat{F_{\tau}}$ supported in a $K^{-2}$ neighbourhood of $\tau$, and suppose there is a $(d-1)$-dimensional vector space $V$, such that each cap has a point with normal in a $K^{-2}$ neighbourhood of $V$. Then for any $\epsilon >0$, 
\[ \|F\|_p \leq C_{\epsilon} K^{\epsilon} \left( \sum_{\tau} \|F_{\tau}\|_p^2 \right)^{1/2}, \quad p = \frac{2(d-1)}{d-3}. \] \end{theorem}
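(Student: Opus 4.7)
The plan is to deduce Theorem \ref{narrower} from Theorem \ref{decouplestandard} applied to the lower-dimensional cone $\Gamma^{d-2}$, whose critical $L^2$ decoupling exponent is exactly $\frac{2(d-1)}{d-3} = p$. The narrow hypothesis is meant to force the caps to lie in a $K^{-1}$-neighborhood of a $(d-2)$-dimensional sub-cone $\Gamma_W$ of $\Gamma^d$ that is affinely equivalent to $\Gamma^{d-2}$, turning the problem into a lower-dimensional instance of cone decoupling.

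First I would translate the condition on normals into a constraint on spherical directions. The outward unit normal to $\Gamma^d$ at $(r\omega, r)$ is a fixed scalar multiple of $(-\omega, 1) \in \mathbb{R}^{d+1}$, so the hypothesis becomes $\omega \in \mathcal{N}_{CK^{-2}}(P)$, where $P \subset \mathbb{R}^d$ is the image of $V \cap \{t = 1\}$ under projection onto the first $d$ coordinates (a $(d-2)$-dimensional affine plane in the generic case; the degenerate case $V \subset \{t = 0\}$ admits no caps for $K$ large). Lemma \ref{obvious} with $k = d-1$ then upgrades this to $\omega \in \mathcal{N}_{C_d K^{-1}}(W)$ for $W := P \cap S^{d-1}$. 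This is precisely the step at which the $K^{-2}$ (rather than $K^{-1}$) hypothesis is essential: it compensates for the possibility that $V$ is nearly tangent to $\Gamma^d$ along a generator, and restores the $K^{-1}$ closeness to the sub-sphere on which the analogous paraboloid argument relies.

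A brief dichotomy on $W$ then completes the geometric setup. If $\operatorname{diam}(W) \lesssim K^{-1}$, all cap directions occupy only $O_d(1)$ caps and the inequality follows from Cauchy--Schwarz. Otherwise $W$ is a $(d-3)$-dimensional ellipsoid of bounded eccentricity, and the sub-cone $\Gamma_W := \{(r\omega, r) : \omega \in W,\ 1 \leq r \leq 2\}$ is affinely equivalent to $\Gamma^{d-2}$ with bounded distortion; I would push the problem through this affine equivalence and appeal to Theorem \ref{decouplestandard} for $\Gamma^{d-2}$.

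The main technical obstacle is that the Fourier support of each $F_\tau$, namely a $K^{-2}$-neighborhood of a $d$-dimensional piece of $\Gamma^d$, lies only in a $K^{-1}$-neighborhood of $\Gamma_W$ in $\mathbb{R}^{d+1}$, whereas Theorem \ref{decouplestandard} for $\Gamma^{d-2}$ asks for Fourier support in a $K^{-2}$-neighborhood of a $(d-2)$-dimensional piece in $\mathbb{R}^{d-1}$. I plan to absorb the two excess directions (normal to $\Gamma_W$ but tangent to $\Gamma^d$) by Fourier duality: the Fourier support has width $K^{-1}$ in those directions, so $F$ is essentially constant on scale $K$ in the dual physical directions, and Minkowski's integral inequality together with a Fubini-type slicing reduces matters to decoupling on $\Gamma^{d-2}$ at each fixed slice and then integrating back. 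Carrying out this slicing step rigorously, while keeping the constants uniform in the eccentricity of $W$, is where the bulk of the technical effort is expected.
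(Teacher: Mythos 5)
Your overall plan — exploit Lemma \ref{obvious} to get $K^{-1}$-closeness of the caps to a lower-dimensional sub-cone, slice in the directions orthogonal to $V$, reduce to $\Gamma^{d-2}$ by an affine map, apply Theorem \ref{decouplestandard}, and reassemble via Minkowski's inequality — is the same strategy as the paper's. The one substantive difference is that you aim to drop two dimensions in a single step, whereas the paper iterates a single-dimension reduction twice (each step costing one of the two vectors $c,c'$ spanning $V^{\perp}$, with the Lorentz rescaling recomputed after the first drop).

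However, two of the geometric claims you make along the way are false in a way that matters. First, the affine map carrying $\Gamma_W$ to $\Gamma^{d-2}$ does \emph{not} have bounded distortion, and $W$ is not an ellipsoid of bounded eccentricity: $W = P\cap S^{d-1}$ is a round $(d-3)$-sphere whose radius $r$ ranges over essentially the full interval $[K^{-1},1]$, so the rescaling must stretch by roughly $1/r$ in the tangent-to-$W$ directions and by roughly $1/r^2$ in the flat (generator) direction of the cone — this is exactly the anisotropic Lorentz-type map $T$ in the paper, whose eigenvalues are $\lambda/\sqrt{2\lambda^{2}-1}$ and $\lambda/(2\lambda^{2}-1)$. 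Without this anisotropy, the projection of $\supp\widehat{F_\tau}$ is not contained in a square $\kappa^{-2}$-neighbourhood of a $\kappa^{-1}$-cap of the lower-dimensional cone, and Theorem \ref{decouplestandard} does not apply. After the correct rescaling the new cap scale is not $K^{-1}$ but $\kappa^{-1}\sim (Kr)^{-1}$, and one needs $\kappa\gtrsim 1$ — which is precisely what your dichotomy on $\operatorname{diam}(W)$ supplies, but only once the dependence on $r$ is tracked honestly. Second, the Fourier support of $F_\tau$ does not in general have width $K^{-1}$ in the slicing directions: the vectors $c,c'$ orthogonal to $V$ can have a substantial radial component (the more so the closer $V$ is to tangency with the cone), and in the radial direction the support of $\widehat{F_\tau}$ has width $\sim 1$. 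The ``$F$ is essentially constant on scale $K$ in the dual directions'' heuristic therefore fails in exactly the regime the $K^{-2}$ hypothesis is designed to handle; the step is nonetheless salvageable because Minkowski's inequality $\|F\|_p^p=\iint|F(x,s)|^p\,dx\,ds$ needs no such constancy, but you should not lean on that heuristic to justify the reduction.

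In short: your plan is recoverable and morally identical to the paper's, but the two claims above — bounded distortion and $K^{-1}$ Fourier width in the excess directions — are the exact places where the near-tangent case bites, and they are the reason the paper performs the reduction in two single-dimension steps with an explicit anisotropic rescaling at each stage rather than attempting the $(d+1)\to(d-1)$ reduction in one shot.
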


\begin{proof} Let $c, c' \in \mathbb{R}^{d+1}$ be orthogonal unit vectors such that 
\[ V = \{ x: \langle x, c \rangle = \langle x, c' \rangle =0\}. \]
After a rotation of the first $d$ coordinates, which fixes the cone, assume that 
\[ c= \left(0,\dotsc, 0, \lambda, \sqrt{1-\lambda^2}\right), \quad \text{and let} \quad c^{\perp} = \left(0, \dotsc ,0, -\sqrt{1-\lambda^2 }, \lambda \right),\]
where $|\lambda| \in [0,1]$, and in general $c^{\perp}$ is distinct from $c'$. Without loss of generality it may be assumed that $V$ has nonempty intersection with the backward half light cone
\[ \left\{ (\xi,-|\xi|) \in \mathbb{R}^{d+1} : \xi_d \geq 0 \right\} \setminus \{0\}, \] 
which implies that $\lambda \in \left[ \frac{1}{\sqrt{2}}, 1 \right]$. If $\lambda \leq \frac{1}{\sqrt{2}} + C_d K^{-2}$ then there are $\lesssim_d 1$ caps in the sum and the inequality is trivial, so it may be assumed that $\lambda \geq \frac{1}{\sqrt{2}} + C_d K^{-2}$, where $C_d$ is a large constant depending only on $d$, to be chosen later. 

 Let $U$ be the unitary defined through the standard basis by
\[ e_i \mapsto e_i \text{ for } 1 \leq i \leq d-1, \quad e_{d} \mapsto c^{\perp}, \quad e_{d+1} \mapsto c. \]

It will be shown that the projection of the support of $\widehat{F}$ onto some plane is close to a lower dimensional cone. To make this precise, let $x_0 \in \supp \widehat{F_{\tau}}$ and $(\xi, |\xi|) \in \tau$ be such that $|x_0-(\xi,|\xi|)| \leq K^{-2}$. By the normal assumption, applying Lemma \ref{obvious} with $k=d-1$ gives a point $\left(\widetilde{\xi}, \big| \widetilde{\xi} \big|\right)$ with $\big| \widetilde{\xi} \big| =|\xi|$ and $\big|\xi-\widetilde{\xi}\big| \lesssim K^{-1}$, such that the normal to the cone at $\left(\widetilde{\xi}, \big| \widetilde{\xi} \big|\right)$ lies in $V$. Hence
\[ \lambda \widetilde{\xi}_d  - \big| \widetilde{\xi} \big|\sqrt{1-\lambda^2 }  = 0. \]

Let $P$ be the projection onto $\{x:\langle x, c \rangle = 0\}$, and let $\widetilde{\eta} = U^* P \left(\widetilde{\xi}, \big| \widetilde{\xi} \big|\right)$. Then
\[ \widetilde{\eta} = \left(\widetilde{\xi}_1, \dotsc, \widetilde{\xi}_{d-1}, \lambda \big| \widetilde{\xi} \big| - \widetilde{\xi}_d\sqrt{1-\lambda^2 } , 0 \right), \quad \widetilde{\eta}_d =\big| \widetilde{\xi} \big| \left( \frac{2\lambda^2 -1}{\lambda} \right). \]
Write $\widetilde{\eta}' = (\widetilde{\eta}_1, \dotsc, \widetilde{\eta}_{d-1})$, so that
\[ \left| \widetilde{\eta}' \right|^2 = \big| \widetilde{\xi} \big|^2 - \widetilde{\xi}_d^2 = \big| \widetilde{\xi} \big|^2 \left( \frac{2 \lambda^2-1}{\lambda^2} \right),  \]
and 
\[ \widetilde{\eta} = \left(\widetilde{\eta}', \left| \widetilde{\eta}' \right| \sqrt{2\lambda^2 -1 }, 0 \right), \quad \frac{1}{3} \left(\frac{\sqrt{2\lambda^2-1}}{\lambda} \right) \leq \left| \widetilde{\eta}' \right| \leq 3 \left(\frac{ \sqrt{2\lambda^2 -1}}{\lambda} \right). \]
Therefore, define $T: \mathbb{R}^{d+1} \to \mathbb{R}^{d+1}$ by 
\[ x \mapsto \left(\frac{\lambda x_1}{\sqrt{2\lambda^2-1}}, \dotsc, \frac{\lambda x_{d-1}}{\sqrt{2\lambda^2 -1}}, \frac{\lambda x_d}{2\lambda^2-1}, x_{d+1} \right). \]
Let 
\[ \widetilde{z}= (\widetilde{\omega}, |\widetilde{\omega}|,0) = TU^* P\left(\widetilde{\xi}, \big| \widetilde{\xi} \big|\right), \quad z=(\omega,\omega_d,0) = TU^*P(\xi, |\xi|). \]
Then $\widetilde{z}$ lies in the lower dimensional truncated cone
\[ \Gamma' = \left\{ (y, |y|,0) \in \mathbb{R}^{d-1} \times \mathbb{R} \times \mathbb{R} : 1/3 \leq |y| \leq 3 \right\}. \]
Since $\big|\xi- \widetilde{\xi} \big|\lesssim K^{-1}$, the distance between $(\xi,|\xi|)$ and the tangent plane to the cone at $\left(\widetilde{\xi}, \big| \widetilde{\xi} \big|\right)$ is $\lesssim K^{-2}$. By the definition of $c$, the tangent plane $W$ at $\left(\widetilde{\xi}, \big|\widetilde{\xi}\big|\right)$ is parallel to $\{x: \langle x, y_0 \rangle = 0\}$ for some nonzero $y_0$ orthogonal to $c$. Hence the projection $P$ sends the tangent plane to $\Gamma$ at $\left(\widetilde{\xi}, \big| \widetilde{\xi} \big|\right)$ to a tangent plane to $P(\Gamma \cap G^{-1}(V))$, where $G$ is the Gauss map sending a point to its unit normal, and so the tangent plane to $\Gamma'$ at $\widetilde{z}$ is $(TU^* P) W$. Therefore, by the definition of $T$, the distance between $z$ and the tangent plane to $\Gamma'$ at $\widetilde{z}$ is $\lesssim \left(K  \sqrt{2 \lambda^2 -1} \right)^{-2}$.  Moreover, the condition $\big| \xi - \widetilde{\xi} \big|\lesssim K^{-1}$ gives
\[ |\omega -\widetilde{\omega}| \lesssim \left( K \sqrt{2\lambda^2 -1} \right)^{-1} \leq \left( K \sqrt{  C_d K^{-2} \sqrt{2}} \right)^{-1}. \]
By taking $C_d$ large enough, depending only on $d$, this gives $|\omega| \geq 1/3$. It follows that the distance from $z$ to the cone $\Gamma'$ is $\lesssim \left(K\sqrt{2\lambda^2 -1 } \right)^{-2}$. Hence each set $TU^*P \left(\supp \widehat{F_{\tau}} \right)$ is contained in a $\sim \left(K\sqrt{2\lambda^2 -1 }  \right)^{-2}$ neighbourhood of a cap $S(\tau)$ in $\Gamma'$ at scale $\sim \left(K\sqrt{2\lambda^2 -1} \right)^{-1}$.

The normal to the cone at $(\widetilde{\omega}, |\widetilde{\omega}|) = TU^*P\left(\widetilde{\xi}, \big|\widetilde{\xi}\big|\right)$ has direction $T^{-1}U^*\mathbf{n}$ where $\mathbf{n}$ is the unit normal to the cone at $\left(\widetilde{\xi}, \big| \widetilde{\xi} \big|\right)$. Hence for each cap $S(\tau) \subseteq \Gamma'$, the $\sim \left( K\sqrt{2\lambda^2 -1 } \right)^{-1}$ neighbourhood of $S(\tau)$ has a point in $\Gamma'$ whose normal lies in the $(d-1)$-dimensional vector space $T^{-1}U^*V$.    

Let 
\[ G_s(x) = (F \circ U \circ T)(x, s), \quad x \in \mathbb{R}^d, \quad s \in \mathbb{R}. \]
The Fourier transform of $F \circ U \circ T$ is $\left( \det T \right)^{-1}\widehat{F} \circ U \circ T^{-1}$, and so
\begin{equation} \label{GS} \widehat{G_s}(\xi_1, \dotsc, \xi_d) = \int e^{2\pi i s\xi_{d+1}} (\det T)^{-1} \left( \widehat{F} \circ U \circ T^{-1}\right)(\xi_1, \dotsc, \xi_d, \xi_{d+1}) \, d\xi_{d+1}, \end{equation}
which can be checked by taking the $d$-dimensional inverse Fourier transform of both sides. Let $\pi: \mathbb{R}^{d+1} \to \mathbb{R}^{d+1}$ be the projection $(x_1, \dotsc, x_d,x_{d+1}) \mapsto (x_1, \dotsc, x_d, 0)$. Then \eqref{GS} shows that the support of $\widehat{G_s}$ is contained in $\pi \left( \supp  \left(\widehat{F} \circ U \circ T^{-1}\right) \right)$. But since $\pi$ commutes with $T$ and $\pi U^* = U^* P$, this means that $\widehat{G_s}$ has support in the $\sim \left(K\sqrt{2\lambda^2 -1 } \right)^{-2}$ neighbourhood of the cone $\Gamma'$, and 
\begin{equation} \label{gsum} G_s = \sum_{\tau} G_{s, S(\tau)}, \end{equation}
is a sum over caps $S(\tau)$ in the cone $\Gamma'$ at scale $\left(K\sqrt{2\lambda^2 -1 }  \right)^{-1}$, such that the support of $\widehat{G_{s, S(\tau)}}$ is contained in the $\sim \left(K\sqrt{2\lambda^2 -1 }  \right)^{-2}$ neighbourhood of $S(\tau)$. By a change of variables,
\begin{equation} \label{steplower} \| F\|_p = \left( \det T \right)^{1/p} \left( \iint |G_s(x) |^p \, dx \, ds \right)^{1/p}.  \end{equation}
By Minkowski's inequality, to decouple $F$ it will suffice to decouple each $G_s$. But the only properties of $F$ used in obtaining \eqref{gsum} and \eqref{steplower} were that $F$ is a sum over $K^{-1}$-caps, and that there is a $d$-dimensional plane such that for each $\tau \in T$, there is a point in the $\sim K^{-1}$ neighbourhood of $\tau$ in the cone with normal lying in the $d$-dimensional plane. By the preceding working, these properties both apply to $G_s$ in one dimension less, with caps at scale $\left(K\sqrt{2\lambda^2 -1 } \right)^{-1}$, and so the same reasoning can be applied to each $G_s$ to get
\[  \int |G_s(x) |^p \, dx = \left(\det T' \right)\iint |H_{s,s'}(y) |^p \, dy \, ds', \]
where 
\[ H_{s,s'} = \sum_{\tau} H_{s,s', S'(\tau)}, \]
is a sum over caps $S'(\tau)$ at scale $\left( K\sqrt{ 2\lambda'^2 -1}\sqrt{2\lambda^2 -1 }  \right)^{-2}$ in the cone 
\[ \Gamma'' = \left\{ (y, |y|,0,0) \in \mathbb{R}^{d-2} \times \mathbb{R} \times \mathbb{R} \times \mathbb{R} : 1/4 \leq |y| \leq 4 \right\} \]
(the case where $\lambda' \leq \frac{1}{\sqrt{2}} + O\left(\left(K \sqrt{2\lambda^2-1}\right)^{-2} \right)$ can be dismissed as before, since there are $\lesssim_d 1$ caps in the sum). But now the standard decoupling theorem for the $(d-2)$-dimensional cone, Theorem \ref{decouplestandard}, can be applied to each $H_{s,s'}$ to get
\begin{align*}  \| F\|_p &= \left( \det T \right)^{1/p} \left( \det T' \right)^{1/p}\left( \iiint |H_{s,s'}(y) |^p \, dy \, ds' \, ds \right)^{1/p}  \\
&\leq C_{\epsilon} K^{\epsilon}\left( \det T \right)^{1/p} \left( \det T' \right)^{1/p}\left( \iint \left( \sum_{\tau} \left\|H_{s,s', S'(\tau)} \right\|_p^2 \right)^{p/2}  \, ds' \, ds \right)^{1/p}  \\
&\leq C_{\epsilon} K^{\epsilon} \left( \det T \right)^{1/p} \left( \det T' \right)^{1/p}\left( \sum_{\tau} \left(\iint  \left\|H_{s,s', S'(\tau)} \right\|_p^p \, ds' \, ds \right)^{2/p} \right)^{1/2} \\
&= C_{\epsilon} K^{\epsilon}\left( \sum_{\tau} \|F\|_p^2 \right)^{1/2}. \end{align*}
This finishes the proof. \end{proof}

\section{Fractal inequality via broad restriction} 

\label{s:multilinear}

The following wave packet decomposition is standard \cite{ou}; one derivation can be found in \cite{me}.
\begin{proposition} \label{wavepacket} Fix a small $\delta>0$, and let $K = R^{\delta}, R^{1/4}$ or $R^{1/2}$. Let $\tau$ be a cap in the cone at scale $K^{-1}$. Then any $f \in L^2(\mathbb{R}^d)$ supported in the projection $\pi(\tau) \cap B(0,2) \setminus B(0,1)$ of the cap $\tau$ onto $\mathbb{R}^d$ can be decomposed as $f= \sum_{\Box} f_{\Box}$, where each $f_{\Box}$ is supported in $(4/3) \pi(\tau)$ and 
\[ \sum_{\Box} \left\|f_{\Box} \right\|_2^2 \lesssim \|f\|_2^2. \]
The sets $\Box$ form a finitely overlapping cover of $\mathbb{R}^{d+1}$, each with dimensions
\[ \frac{RK^{\delta}}{K} \times \dotsm \times \frac{RK^{\delta}}{K} \times \frac{RK^{\delta}}{K^2} \times R, \]
with long axis normal to $\tau$ and short axis in the flat direction of $\tau$. The restriction of each $Ef_{\Box}$ to $B(0,R)$ is essentially supported in the set $\Box$, with
\[ \sum_{\Box: (x,t) \notin \Box} |Ef_{\Box}(x,t)| \lesssim_N R^{-N} \|f\|_2 \quad \text{if $|(x,t)| \leq R$}, \]
for arbitrarily large $N$. 
\end{proposition}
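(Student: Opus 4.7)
The proposition is a standard wave packet decomposition, and the plan is to adapt the Córdoba--Fefferman--Tao construction (cf.\ \cite{guthlecture}) to the cone's anisotropic geometry. First, I would reduce to a model cap by applying a rotation of $\mathbb{R}^d$ that lifts to an isometry of $\mathbb{R}^{d+1}$ preserving $\Gamma$: after this reduction, $\tau$ may be taken centered near $(0,\ldots,0,3/2,3/2)$, with $\pi(\tau)\cap(B(0,2)\setminus B(0,1))$ contained in an axis-parallel box of dimensions $\sim 1\times K^{-1}\times\cdots\times K^{-1}$ (the long side along the radial/flat direction $e_d$). A further rotation in the $(e_d,e_{d+1})$-plane (essentially the map $A$ from the introduction) then aligns the surface normal to $\tau$ with $e_{d+1}$ (the long axis of the tubes $\Box$) and the flat direction of $\tau$ with $e_d$ (the short axis of $\Box$), so each $\Box$ becomes an axis-parallel box with the prescribed dimensions.

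On the frequency side, I would partition $(4/3)\pi(\tau)$ by a smooth partition of unity $\{\psi_\omega\}$ subordinate to a tiling by boxes $\omega\subset\mathbb{R}^d$ of dimensions $K^{1-\delta}/R\times\cdots\times K^{1-\delta}/R\times K^{2-\delta}/R$, with the short axis of $\omega$ along the flat direction (dual to the short axis of $\Box$). For each $\omega$, expand $f\psi_\omega$ as a Fourier series on the dual torus:
\[ f(\xi)\psi_\omega(\xi)=\psi_\omega(\xi)\sum_{v\in\omega^*}c_{\omega,v}\,e^{2\pi iv\cdot\xi}, \]
where $\omega^*\subset\mathbb{R}^d$ is a lattice with spacing $\sim RK^\delta/K$ in the $d-1$ angular directions and $\sim RK^\delta/K^2$ in the flat direction. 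Set $f_\Box:=c_{\omega,v}\,\psi_\omega\,e^{2\pi iv\cdot(\cdot)}$ for $\Box=\Box(\omega,v)$. Under $E$, the modulation $e^{2\pi iv\cdot\xi}$ translates $Ef$ by $-v$ in space, and as $t$ varies the group velocity $\xi/|\xi|$ at the center of $\omega$ sweeps out a tube in $\mathbb{R}^{d+1}$ of the prescribed shape.

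The essential support of $Ef_\Box$ in $\Box$ then follows by a routine non-stationary phase argument: for $(x,t)\notin\Box$ with $|(x,t)|\leq R$, the $\xi$-gradient of $\langle\xi,x+v\rangle+|\xi|t$ on the support of $\psi_\omega$ is $\gtrsim RK^\delta/K$ in at least one angular direction or $\gtrsim RK^\delta/K^2$ in the flat direction, and repeated integration by parts against $\psi_\omega$ (chosen smoother than the natural scale of $\omega$ by the $K^\delta$-padding) yields $R^{-N}$ decay. The almost-orthogonality $\sum_\Box\|f_\Box\|_2^2\lesssim\|f\|_2^2$ follows from Plancherel on each torus, $\sum_{v\in\omega^*}|c_{\omega,v}|^2\|\psi_\omega\|_2^2\lesssim\|f\psi_\omega\|_2^2$, combined with the finite overlap of $\{\psi_\omega\}$. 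The main technical point, which distinguishes this from the paraboloid, is the asymmetric scaling: because the cone has zero curvature along the radial direction, the dual frequency box $\omega$ must be a factor $K$ wider in the flat direction than in the angular ones, forcing the $R/K^2$ versus $R/K$ dimensions of $\Box$ and necessitating the extra $(e_d,e_{d+1})$-rotation in the reduction step.
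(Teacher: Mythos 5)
The paper cites this proposition to \cite{ou} and \cite{me} without giving an in-text proof, and your construction is the standard Córdoba--Fefferman/Tao wave packet decomposition adapted to the cone's anisotropic geometry, which is what those references carry out; the essential ingredients (frequency tiling, Fourier series on each tile, non-stationary phase with $K^{\delta}$-padding for rapid decay, Bessel for almost-orthogonality) all match. Two small points to tidy: your description ``the short axis of $\omega$ along the flat direction'' has the axes reversed --- with the dimensions you wrote, $K^{2-\delta}/R$ is the \emph{long} axis of $\omega$, which is correct since the flat direction is the \emph{short} axis of $\Box$ --- your later remark (``$\omega$ must be a factor $K$ wider in the flat direction'') gets it right; and writing $\Box = \Box(\omega,v)$ suggests a bijection, whereas many pairs $(\omega,v)$ produce tubes landing in the same $\Box$ (and a single wedge of directions for fixed $v$ may straddle a couple of boxes), so one should explicitly define $f_{\Box}$ as the sum of $c_{\omega,v}\,\psi_{\omega}\,e^{2\pi i v\cdot(\cdot)}$ over all $(\omega,v)$ whose tube is essentially contained in $\Box$, with the almost-orthogonality then passing through as a sub-sum.
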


To prove the main fractal inequality of this section, a weighted version of the broad restriction inequality from \cite{ou} will be needed. The only novelty is the insertion of the weight into the proof from \cite{ou}, but for completeness most of the details will at least be sketched. The weight was used in a similar way in \cite[Eq. 5.10]{du}. 

Decompose the cone into caps $\tau$ at scale $K^{-1}$. For a point $x$ in the cone $\Gamma$, let $G(x)$ be the unit normal to the cone at $x$. Let $G(\tau)$ be the set of unit normals to points in $\tau$. For any vector space $V \subseteq \mathbb{R}^{d+1}$, define the angle between $G(\tau)$ and $V$ by
\[ \angle( G(\tau), V) = \min_{x \in \tau, v \in V} \angle( G(x), v ). \]
For an exponent $q$, an integer $k$ with $2 \leq k \leq d+1$, a large positive integer $A$, and a parameter $R>K^2$, define the \emph{broad norm} by
\[ \| Ef\|_{BL^q_{k,A}(B(0,R) \cap Y)}^q := \sum_{B_{K^2} \subseteq Y} \mu_{Ef}(B_{K^2}), \]
where the sum is over a union $Y$ of $K^2$-cubes $B_{K^2}$ in $B(0,R)$, and
\[\mu_{Ef}(B_{K^2}) :=  \min_{V_1, \dotsc, V_A \in \gr(k-1,d+1)} \max_{\substack{\tau \\
  \angle( G(\tau), V_a) \geq K^{-2} \, \forall a}} \|Ef_{\tau}\|_{L^q(B_{K^2})}^q, \]
for every $B_{K^2} \subseteq Y$. The set $\gr(k-1,d+1)$ is the set of $(k-1)$-dimensional subspaces of $\mathbb{R}^{d+1}$, and $\mu_{Ef}= \mu_{Ef,Y}$ is a measure extended by zero away from the cubes in $Y$.

\begin{lemma} \label{broadweight} Fix $d\geq 3$, $\epsilon,\delta,R, \alpha, \beta, \gamma >0$ and $K=R^{\delta}$. If $Y \subseteq \mathbb{R}^{d+1}$ is a union of $K^2$-cubes in $B(0,R)$ satisfying  
\begin{equation} \label{Ydim} \int_{B(x,r)} \chi_Y \, dy \leq \gamma r^{\alpha} \quad \text{for all $x \in \mathbb{R}^{d+1}$ and $r>K^2$,} \end{equation} 
and if 
\begin{equation} \label{alphabetacond} \frac{d+1}{2} < \alpha <d, \quad  \beta< \min\left\{ \alpha-1 + \frac{d-\alpha}{d-1}, \beta\big(\alpha, \Gamma^d \big) \right\} \end{equation}
then for $\delta = \delta(\epsilon) \ll \epsilon$ small enough, there is a constant $A= A(\epsilon)>0$ such that 
\[ \|Ef\|_{BL_{d,A}^q(B(0,R) \cap Y)} \leq C_{\epsilon} \gamma^{\lambda/2} R^{\epsilon} \|f\|_2 , \]
where 
\begin{equation} \label{qcond} \frac{1}{2} - \frac{1}{q} = \frac{1}{2d + \frac{1}{\alpha-\beta}}, \quad \lambda := \frac{\frac{1}{\alpha-\beta}}{2d+\frac{1}{\alpha-\beta}}. \end{equation}
\end{lemma}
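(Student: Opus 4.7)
The plan is to follow the induction-on-scales argument underlying the $d$-broad restriction inequality on the cone from Ou--Wang \cite{ou}, but with the fractal weight $\gamma^{\lambda/2}$ carried through the entire argument in the style of Du--Zhang \cite{zhang}. I would induct on $R$, with the small loss $R^\delta$ per step absorbed into the target $R^\epsilon$ by taking $\delta(\epsilon)$ small enough. The induction hypothesis is the inequality at scale $R/K$ with $K = R^\delta$, and secondarily at a smaller value of $\|f\|_2$.

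Having fixed $K = R^\delta$, the first step is to decompose $Ef = \sum_\tau Ef_\tau$ over $K^{-1}$-caps using Proposition \ref{wavepacket} and then analyse the min-max defining $\mu_{Ef}(B_{K^2})$ for each $K^2$-cube $B_{K^2} \subseteq Y$. After standard pigeonholing (over which $(d-1)$-subspaces nearly attain the minimum and which transverse caps nearly attain the maximum), each $B_{K^2}$ falls into either a \emph{narrow} case, where all $L^q$-significant caps lie in a $K^{-2}$-neighbourhood of a fixed $(d-1)$-subspace $V$, or a \emph{broad} case, where a $K^{-2}$-transverse collection of caps contribute comparably.

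For a narrow $B_{K^2}$, the contributing caps satisfy the hypotheses of Theorem \ref{narrower}, which yields an $L^p$-decoupling inequality with $p = 2(d-1)/(d-3)$; combined with the $L^2$-orthogonality of the wave packets from Proposition \ref{wavepacket}, this reduces the narrow contribution to a sum over narrow caps $\tau$. Parabolic rescaling of each such $\tau$ then produces a similar problem at scale $R/K$, and the fractal condition \eqref{Ydim} pulls back to an analogous condition for the rescaled set with the natural update to $\gamma$. The induction hypothesis closes this case.

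The broad case is the main obstacle and is where the weight is created. Rather than invoke the Bennett--Carbery--Tao $d$-linear restriction inequality used by Du--Zhang for the paraboloid, I would insert $\chi_Y$ into the $d$-linear Kakeya/restriction estimate as in \cite[Eq.~5.10]{du}, converting the $\alpha$-dimensional counting \eqref{Ydim} of admissible $K^2$-cubes into the factor $\gamma^{\lambda/2}$. The interpolation identity \eqref{qcond} is precisely what balances the narrow-case gain from Theorem \ref{narrower} after rescaling against the broad-case weighted bound, and the two parts of \eqref{alphabetacond} play complementary roles: $\beta < \beta(\alpha, \Gamma^d)$ is what allows the base case of the induction to be fed by the known Fourier decay, while $\beta < \alpha - 1 + (d-\alpha)/(d-1)$ is the calibration that makes the narrow decoupling exponent $2(d-1)/(d-3)$ sufficient to close the induction without an $R^{c\delta}$ loss.
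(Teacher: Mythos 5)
Your proposal confuses the proof of Lemma \ref{broadweight} with that of the subsequent Lemma \ref{prop2}. The broad norm $\|Ef\|_{BL^q_{d,A}}$ is by construction the quantity left over after the min over $(d-1)$-planes $V_1,\dotsc,V_A$ and max over caps transverse to all $V_a$; narrow contributions are already discarded by definition. So there is no narrow/broad dichotomy inside the proof of a broad norm estimate, and the narrow decoupling Theorem \ref{narrower} plays no role here. The narrow/broad split, the application of Theorem \ref{narrower}, the Lorentz rescaling of caps, and the pullback of the fractal condition to a rescaled $\widetilde\gamma$ are precisely the ingredients of Lemma \ref{prop2}, which \emph{calls} Lemma \ref{broadweight} as a black box in its broad case. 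Running that argument here would be circular.

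What the paper actually does for Lemma \ref{broadweight} is Guth-style polynomial partitioning with the usual cellular/transverse/tangent trichotomy. The condition $\beta < \alpha - 1 + \frac{d-\alpha}{d-1}$ (together with $\alpha > \frac{d+1}{2}$) is used in the \emph{cellular} case to make the exponent of the polynomial degree $D$ negative, allowing the induction on scales to close; it has nothing to do with calibrating narrow decoupling. The weight is created entirely in the \emph{tangent} case, not by inserting $\chi_Y$ into a $d$-linear Kakeya inequality: the paper explicitly avoids the Bennett--Carbery--Tao multilinear estimate. Instead it interpolates a weighted $L^2$ Wolff-type bound (\eqref{wavepacket2}, which is where the hypothesis $\beta < \beta(\alpha,\Gamma^d)$ enters) against the unweighted Ou--Wang broad restriction estimate (\eqref{ou-wang}, at exponent $q_{d,d} = \frac{2d}{d-1}$); the identity \eqref{qcond} defining $q$ and $\lambda$ is chosen exactly so that the nontrivial powers of $\rho$ cancel after this interpolation. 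In short: your proposal omits polynomial partitioning entirely, which is the structural core of the argument, and misattributes the roles of both hypotheses in \eqref{alphabetacond} as well as the mechanism by which the weight $\gamma^{\lambda/2}$ appears.
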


\begin{proof}
By induction on $R$, assume the lemma holds for radii at most $R/2$. Let $D$ be a large constant to be chosen later. Using \cite[Theorem 5.5]{guth2}, there is a nonzero polynomial $P$ on $\mathbb{R}^{d+1}$ of degree $\lesssim D$, which is a product of $\sim \log D$ non-singular polynomials, whose zero set $Z(P)$ is such that $\mathbb{R}^{d+1} \setminus Z(P)$ has $\sim D^{d+1}$ connected components $O_i$, with the property that $\mu_{Ef}(O_i)$ is constant in $i$ up to a factor of 2. Let $W$ be the $R^{1/2 + \delta}$ neighbourhood of $Z(P)$. Then 
\begin{equation} \label{dichotomy2} \mu_{Ef}(B(0,R)) = \mu_{Ef}(W \cap B(0,R)) + \sum_i \mu_{Ef}(O_i \setminus W). \end{equation}
The sets $O_i \setminus W$ are called cells.

 If the cellular terms contribute at least $50\%$ to \eqref{dichotomy2}, then since the contribution of the sets $O_i$ are equal, at least $90\%$ of the $i$'s must satisfy 
\[ \mu_{Ef}(B(0,R)) \lesssim D^{d+1} \mu_{Ef}(O_i \setminus W). \]
Break $f$ up via the wave packet decomposition from Proposition \ref{wavepacket} with $K= R^{1/2}$, and let $f_i$ be the sum over the sets $\Box_{\tau}$ which intersect $O_i \setminus W$. Then
\[ \mu_{Ef}(O_i \setminus W) \lesssim \mu_{Ef_i}(O_i \setminus W) + R^{-N}\|f\|_2^q \leq \mu_{Ef_i}  (B(0,R)) + R^{-N}\|f\|_2^q, \]
for arbitrarily large $N$. The lemma has been assumed at scale $R/2$, and therefore holds up to a constant factor for $\mu_{Ef_i}(B(0,R))$. Hence 
\begin{equation} \label{bound1} \mu_{Ef_i}(O_i \setminus W) \lesssim \gamma^{(\lambda q)/2} R^{\epsilon q}\|f_i\|_2^q. \end{equation}
If a set $\Box_{\tau}$ intersects $O_i \setminus W$, then the centre line of $\Box_{\tau}$ intersects $O_i$. The restriction of $P$ to this centre line is a one-variable polynomial of degree at most $D$ which is not identically zero, and therefore has at most $D$ zeroes. Hence $D\|f\|_2^2 \gtrsim \sum_i \|f_i\|_2^2$, and therefore
\begin{equation} \label{bound2} \|f\|_2^2 \gtrsim D^{d} \|f_i\|_2^2, \end{equation}
for at least $90\%$ of the $i$'s. Hence \eqref{bound1} and \eqref{bound2} hold for at least $80\%$ of the $i$'s, and in particular the set $S$ of such $i$'s is nonempty. For $i \in S$,
\begin{align*} \|Ef\|_{BL_{d,A}^q(B(0,R) \cap Y)}^q &\lesssim D^{d+1} \mu_{Ef_i}(O_i \setminus W) + R^{-N}\|f\|_2^q \\
&\lesssim  D^{d+1}C \gamma^{(\lambda q) /2}R^{\epsilon q}\|f_i\|_2^q  + R^{-N} \|f\|_2^q\\
&\leq CD^{(d+1) - \frac{qd}{2}}  \gamma^{(\lambda q )/2}R^{\epsilon q} \|f\|_2^q. \end{align*}
The conditions $\alpha > \frac{d+1}{2}$ and $\beta < \alpha-1 + \frac{d-\alpha}{d-1}$ from \eqref{alphabetacond} combined with the definition of $q$ in \eqref{qcond} ensure that the exponent of $D$ is negative. At this point, choose the constant $D$ to be large enough to eliminate the implicit constants, so that the induction closes. This covers the case where the cellular terms dominate \eqref{dichotomy2}. 

Now suppose the non-cellular term dominates \eqref{dichotomy2}. By partitioning $Z(P)$ into $\sim \log D$ varieties, it may be assumed that the polynomial $P$ is nonsingular. Let $\{B_j\}_j$ be a covering of $B(0,R)$ by balls of a fixed radius $\rho < R/2$ (a sufficiently small constant multiple of $R$, to be chosen later). Define a set $\Box_{\tau}$ to be $R^{-1/2+\delta}$-tangent to $Z$ in $B_j$ if the following two conditions hold:
\begin{enumerate}
\item $\Box_{\tau} \cap 2B_j \subseteq N_{10R^{1/2 + \delta}}(Z) \cap 2B_j$;
\item $\angle \left(\Box_{\tau}, T_zZ \right) \leq R^{-1/2+\delta} \text{ for all } z \in Z \cap 2B_j \cap N_{100R^{1/2+\delta}} \Box_{\tau}$. \end{enumerate}
Let $\mathbb{T}_j := \left\{ (\tau, \Box_{\tau}): \Box_{\tau} \cap B_j \cap N_{R^{1/2+\delta}}(Z) \neq \emptyset  \right\}$, let
\[ \mathbb{T}_{j,tang} := \left\{ (\tau, \Box_{\tau}) \in \mathbb{T}_j: \text{$\Box_{\tau}$ is $R^{-1/2+\delta}$ tangent to $Z$ in $B_j$}  \right\}, \]
and $\mathbb{T}_{j,trans} := \mathbb{T}_j \setminus \mathbb{T}_{j,tang}$, and define $Ef_{j,tang}$, $Ef_{j,trans}$ accordingly. Since the non-cellular term dominates \eqref{dichotomy2}, 
\begin{align} \notag \mu_{Ef}(B(0,R)) &\lesssim \mu_{Ef}( W \cap B(0,R)) \\ 
\label{subcase} &\lesssim   \sum_j  \mu_{Ef_{j,trans}}( B_j) + \sum_j  \mu_{Ef_{j,tang}}(B_j) + R^{-N} \|f\|_2^q. \end{align} 

Suppose first that the transverse terms dominate \eqref{subcase}. Then since $q \geq 2$, 
\begin{align*} \mu_{Ef}(B(0,R)) &\lesssim \sum_j \mu_{Ef_{j, trans}}( B_j)  + R^{-N}\|f\|_2^q\\
&\lesssim \sum_j   \left[ \rho^{\epsilon}\right]^q \gamma^{(\lambda q)/2}\|f_{j, trans}\|_2^q  + R^{-N}\|f\|_2^q\\
&\lesssim \rho^{\epsilon q} \gamma^{(\lambda q)/2}\|f\|_2^q, \end{align*}
where to get from the second-last to the last line, transversality ensures there are $\lesssim 1$ overlaps in the sum, see \cite[Section 8.4]{guth2}. Since $\rho = R/C$, choosing the constant $C$ large enough closes the induction in the transverse subcase. 

For the remaining subcase, suppose the tangential terms dominate in \eqref{subcase}. Since $\beta < \beta\big(\alpha, \Gamma^d \big)$ by the assumption in \eqref{alphabetacond}, the inequality
\begin{equation} \label{wavepacket2} \|Ef_{j,tang}\|_{BL_{d,0}^2(B_j \cap Y)} \lesssim \left(\sum_{\tau} \|Ef_{j,tang,\tau}\|_{L^2(B_j \cap Y)}^2 \right)^{1/2} \lesssim \gamma^{1/2} \rho^{\frac{\alpha-\beta}{2}}  \|f_{j,tang}\|_2, \end{equation}
follows from a standard result of Wolff (see \cite[Proposition 5.3]{me} for a proof). By \cite[Theorem 4]{ou} with $n=d+1$, $k=m=d$ and $q_{d,d} := \frac{2d}{d-1}$,
\begin{equation} \label{ou-wang} \|Ef_{j,tang}\|_{BL_{d,A}^{q_{d,d}}(B_j\cap Y)}  \lesssim \|Ef_{j,tang}\|_{BL_{d,A}^{q_{d,d}}(B_j)} \lesssim K^{O(1)} \rho^{\epsilon} \rho^{\frac{-1}{4d}}\|f_{j,tang}\|_2. \end{equation}
The middle norm refers to the unweighted case where $Y=B_j$. The angle used here in the definition of the broad norm is less restrictive than in \cite{ou}, but (\cite{wang}) this does not harm the inequality. This is essentially since Lemma 2.2 and Equation 2.6 of \cite{ou} still hold. 

The definition of $\lambda$ in \eqref{qcond} satisfies
\[ \frac{1}{q} = \frac{\lambda}{2} + \frac{1-\lambda}{q_{d,d}}, \]
and so interpolation of \eqref{wavepacket2} and \eqref{ou-wang} via Hölder's inequality for the broad norm \cite[Lemma 4.2]{guth2} gives  
\[ \|Ef_{j,tang}\|_{BL_{d,A}^q(B_j \cap Y)} \lesssim \gamma^{\lambda/2} \rho^{\frac{\lambda(\alpha-\beta)}{2}} \rho^{\frac{-(1-\lambda)}{4d}} \rho^{(1-\lambda)\epsilon} K^{O(1)} \|f_{j,tang}\|_2. \]
The non-infinitesimal exponent of $\rho$ vanishes by \eqref{qcond}. Therefore summing over $j$ and using $q \geq 2$ gives
\[ \|Ef\|_{BL_{d,A}^q(B(0,R) \cap Y)}  \leq C_{\epsilon} \gamma^{\lambda/2} R^{\epsilon} \|f\|_2, \]
closing the induction in the tangential case. This finishes the proof. \end{proof}

The next lemma converts the preceding broad inequality to a linear one. It is formulated as an $L^2 \to L^p$ inequality with a parameter $K$ (which may essentially be thought of as equal to 1) in order to work well with $\ell^2$ decoupling for the $L^p$ norm and an induction on scale argument. The parameter $M$ will be eliminated when passing from an $L^2 \to L^p$ to an $L^2 \to L^2$ inequality. Although some steps in the proof are similar to those in \cite{zhang}, and also \cite{ou} and \cite{guth2}, most details will be included for completeness.

\begin{lemma} \label{prop2} Let $d \geq 3$, $\epsilon, \delta,\alpha,\beta, R, \gamma>0$, $K= R^{\delta}$, $p= \frac{2(d-1)}{d-3}$ and
\[ \frac{d+1}{2} < \alpha < d, \quad \beta < \min\left\{ \alpha-1 + \frac{d-\alpha}{d-1} , \beta\big(\alpha, \Gamma^d \big)\right\}. \]
Let $Y = \bigcup_{k=1}^M B_k$ be a union of disjoint $K^2$-cubes in $B(0,R)$, which are all translates of each another. Suppose that $\|Ef\|_{L^p(B_k)}$ is constant in $k$ up to a factor of 2, and that
\begin{equation} \label{fractal} \int_{B(x,r)} \chi_Y \leq \gamma r^{\alpha} \quad \text{for all $x \in \mathbb{R}^{d+1}$ and $r>K^2$. } \end{equation}
Let $k(\alpha,\beta):= \min\left\{ \frac{2(d-\alpha)}{\alpha-1}, \frac{1}{\alpha-\beta} \right\}$. Then for $\delta=\delta(\epsilon) \ll \epsilon$ small enough,
\[ \left\| Ef \right\|_{L^p(Y) } \leq C_{\epsilon} M^{\frac{-1}{d-1}} \gamma^{ \frac{1}{d-1}} R^{ \frac{\alpha}{2d+k(\alpha,\beta)} + \epsilon} \|f\|_2. \] \end{lemma}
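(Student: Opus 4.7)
The plan is to induct on $R$, combining a broad--narrow analysis at the $K^2$-cube level with Lemma~\ref{broadweight} in the broad case and Theorem~\ref{narrower} in the narrow case. Assuming the bound at all scales at most $R/2$, I would first apply a standard broad--narrow pigeonholing at each $K^2$-cube $B \subseteq Y$: either $\|Ef\|_{L^p(B)}^p \leq K^{O(1)}\,\mu_{Ef}(B)$ in the sense of the broad norm from Section~\ref{s:multilinear}, or there is a $(d-1)$-plane $V \subseteq \mathbb{R}^{d+1}$ such that $\|Ef\|_{L^p(B)} \lesssim \bigl\|\sum_{\tau:\,\angle(G(\tau),V)\leq K^{-2}} Ef_\tau\bigr\|_{L^p(B)}$. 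Since $\|Ef\|_{L^p(B_k)}$ is roughly constant in $k$, one case must apply to a constant fraction of the cubes, and I can restrict attention to that subcollection at the cost of a factor of $2$ in $M$.

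In the broad subcase, summing over broad cubes and applying Lemma~\ref{broadweight} to $\sum_B \mu_{Ef}(B) = \|Ef\|_{BL^q_{d,A}(B(0,R)\cap Y)}^q$ yields the required control at the $BL^q$ level. A local Bernstein-type inequality, together with the frequency-localised estimate $\|Ef_\tau\|_{L^\infty(B)} \lesssim K^{-(d-1)/2}\|f_\tau\|_2$, converts the $L^q$ bound on each cube to an $L^p$ bound at the cost of a factor $K^{O(1)}$. Combining with the fractal bound $M|B_{K^2}|\leq \gamma R^\alpha$ from \eqref{fractal} and the identity $\frac{1}{2}-\frac{1}{q} = \frac{1}{2d+1/(\alpha-\beta)}$ from \eqref{qcond}, algebraic bookkeeping should yield the target bound with $k(\alpha,\beta) = 1/(\alpha-\beta)$.

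In the narrow subcase, Theorem~\ref{narrower} applied on each $K^2$-cube, followed by taking $p$-th powers, summing over $B \subseteq Y$, and invoking Minkowski's inequality in $\ell^{p/2}$ (valid since $p\geq 2$), gives
\[ \|Ef\|_{L^p(Y)} \leq C_\epsilon K^\epsilon \Bigl(\sum_\tau \|Ef_\tau\|_{L^p(Y)}^2\Bigr)^{1/2}, \]
where the sum is over the $\sim K^{d-2}$ narrow caps. Each $\|Ef_\tau\|_{L^p(Y)}$ would then be handled by parabolic rescaling of the cone: a linear map adapted to the geometry of $\tau$ reduces the problem on $\tau$ at scale $R$ to one on the full cone at a smaller scale, transporting $Y$ into a rescaled union of cubes $\widetilde Y$ satisfying \eqref{fractal} with renormalised parameters $(\widetilde\gamma,\widetilde M,\alpha)$. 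The inductive hypothesis then closes this branch with $k(\alpha,\beta) = 2(d-\alpha)/(\alpha-1)$, so the minimum of the two values of $k$ appears in the final bound.

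The main obstacle I foresee is the parabolic rescaling step: the cone has a flat radial direction, and the narrow caps' normals may concentrate along a subspace nearly tangent to the cone, so the anisotropic rescaling must be performed with care (analogous to the rescaling in the proof of Theorem~\ref{narrower}) to ensure $\widetilde Y$ still satisfies a fractal condition with a controlled $\widetilde\gamma$ and that $(M,\gamma,R^\alpha)$ transform in a way compatible with the target combination $M^{-1/(d-1)}\gamma^{1/(d-1)}$. A secondary concern is that the Bernstein conversion in the broad subcase must lose only $K^{O(1)} = R^{O(\delta)}$, which will be absorbable into $R^\epsilon$ provided $\delta = \delta(\epsilon)$ is chosen small enough.
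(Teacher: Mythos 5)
Your overall strategy — broad--narrow dichotomy at the $K^2$-cube level, Lemma~\ref{broadweight} in the broad branch, Theorem~\ref{narrower} plus Lorentz rescaling and induction on $R$ in the narrow branch — is the same as the paper's, and your identification of which branch produces which of the two terms in $k(\alpha,\beta)$ is correct. The broad branch as you sketch it, with the $L^q$-to-$L^p$ conversion on $K^2$-cubes and absorption of $K^{O(1)}$ into $R^\epsilon$ for $\delta\ll\epsilon$, also matches the paper.

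The genuine gap is exactly the one you flag but do not resolve: the rescaling step in the narrow branch. After narrow decoupling you want to apply the inductive hypothesis to each term, but the hypothesis requires a union of $\widetilde K^2$-cubes of equal side, all translates of one another, with equal $L^p$-contribution and a controlled $\widetilde\gamma$. Rescaling $Y$ directly by the Lorentz map adapted to $\tau$ does not produce this: the map is anisotropic (roughly $K$ in $d-1$ directions, $K^2$ along the generator, and $1$ in the remaining direction), so the original $K^2$-cubes in $Y$ are carried to highly eccentric boxes, not cubes at a single uniform scale, and the resulting set is not admissible for the inductive hypothesis. Writing ``$\widetilde Y$ satisfies \eqref{fractal} with renormalised parameters'' asserts the conclusion of the hard step rather than carrying it out. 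The paper fills this gap with an additional layer of decomposition that your proposal omits: $f_\tau$ is first split into wave packets $f_{\Box_\tau}$ (Proposition~\ref{wavepacket} at scale $K=R^\delta$); physical space is then tiled, per $\tau$, by boxes $S$ of anisotropic dimensions chosen precisely so that the Lorentz map sends each $S$ to a genuine $\widetilde K^2$-cube; a smooth partition of unity $\{\eta_S\}$ with $\widehat{\eta_S}$ supported in a box comparable to the $K^{-2}$-neighbourhood of $\tau$ keeps the Fourier support of $\eta_S Ef_\Box$ compatible with narrow decoupling; and several pigeonholings (in the dyadic size $\kappa$ of $\|Ef_\Box\|_{L^p(2S)}$, the multiplicity $\eta$ of cubes $B$ per box $S$, and the plane index $a$) produce sets $Y_\Box$ to which the inductive hypothesis at scale $\widetilde R$ genuinely applies. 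The bookkeeping identity $M\mu \lesssim K^{O(\delta)}|\mathbb{B}|\widetilde M\eta$ and the bound $\widetilde\gamma \lesssim K^{1+\alpha+O(\delta)}\gamma\eta^{-1}$ are then what make the powers of $M$ and $\gamma$ close the induction; these are not automatic and are exactly what fails if one tries to rescale $Y$ directly.

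A smaller but related issue: your display
\[ \|Ef\|_{L^p(Y)} \leq C_\epsilon K^\epsilon \Bigl(\sum_\tau \|Ef_\tau\|_{L^p(Y)}^2\Bigr)^{1/2} \]
implicitly assumes a single $(d-1)$-plane works for every narrow cube $B$, whereas the minimising planes $V_1,\dotsc,V_A$ in the broad-norm dichotomy depend on $B$. The paper handles this by pigeonholing the index $a$ as part of the triple $(\eta,\kappa,a)$ in \eqref{pigeontriangle}. Also, for the final $M$-power to come out as $M^{-1/(d-1)}$ you need to pass from the $\ell^2$ sum given by decoupling to an $\ell^p$ sum (Hölder costs $\mu^{1/2-1/p}=\mu^{1/(d-1)}$), and this multiplicity $\mu$ must itself be tracked and related to $M$, $\widetilde M$, $\eta$, $|\mathbb{B}|$ as in \eqref{mubound}; your proposal does not address this.
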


\begin{proof}  Partition the truncated cone into caps at scale $K^{-1}$. Let $T$ be the set of caps. Fix the constants $\delta = \delta(\epsilon)$ and $A=A(\epsilon)$ from Lemma \ref{broadweight}.  For each cube $B \subseteq Y$, define the set of ``significant'' caps by 
\[ \mathcal{S}(B) = \left\{ \tau \in T: \|Ef_{\tau}\|_{L^p(B)} \geq \frac{1}{2|T|} \left\| Ef \right\|_{L^p(B)} \right\}. \] Fix some $K^2$-cube $B \subseteq Y$. Choose a collection of $A$ $(d-1)$-dimensional subspaces $V_a$, depending on $B$, attaining the minimum 
\[  \max_{\substack{\tau \\
\angle (G(\tau), V_a) \geq K^{-2} \, \forall a}} \int_B |Ef_{\tau}|^p = \min_{W_1, \dotsc,  W_a \in \gr(d-1, d+1)} \max_{\substack{\tau \\
\angle (G(\tau), W_a) \geq K^{-2} \, \forall a} }\int_B |Ef_{\tau}|^p. \]
Henceforth the notation $\tau \in V$ will be used to indicate $\angle (G(\tau), V) < K^{-2}$. If there exists $\tau \in \mathcal{S}(B)$ such that $\angle (G(\tau), V_a) \geq K^{-2}$ for all $a$, then 
\[  \int_B |Ef|^p \lesssim K^{O(1)} \max_{ \tau \notin V_a \, \forall a} \int_B |Ef_{\tau}|^p. \]
Otherwise $\int_B |Ef|^p \lesssim \sum_{a=1}^A \int_B \left| \sum_{\tau \in V_a \setminus \bigcup_{b=1}^{a-1} V_b } Ef_{\tau} \right|^p$ by the triangle inequality. In either case
\begin{align} \label{dichotomy}  \int_B |Ef|^p &\lesssim K^{O(1)} \min_{W_1, \dotsc , W_a \in \gr(d-1, d+1)} \max_{\tau \notin W_a \, \forall a}\int_B |Ef_{\tau}|^p \\
\notag &\quad +  \sum_{a=1}^A \int_B \left| \sum_{\tau \in V_a \setminus \bigcup_{b=1}^{a-1} V_b} Ef_{\tau} \right|^p. \end{align}
The cube $B$ is called broad if the first term dominates, and narrow if the second term dominates. Since $\|Ef\|_{L^p(B_k)}$ is essentially constant in $k$, it suffices to bound $\|Ef\|_{L^p(Y)}$ in two distinct cases; at least half of the cubes in $Y$ are broad, or at least half of the cubes in $Y$ are narrow.

In the broad case, it may be assumed that all of the cubes are broad. Define $q$ by \eqref{qcond}. By the uncertainty principle, 
\begin{equation} \label{uncertainty} \int_B |Ef_{\tau}|^p \lesssim K^{O(1)} \|Ef_{\tau}\|_{L^q(2B)}^p + R^{-N} \|f\|_2^p, \end{equation}
for every cap $\tau$, and for arbitrarily large $N$. 

%To see this, let $\phi$ be a non-negative Schwartz function equal to 1 on the ball $B(0, 2\sqrt{2})$ containing $\Gamma$ and vanishing outside the larger ball $B(0,3)$. Then $\widehat{Ef_{\tau}} = \widehat{ Ef_{\tau}} \phi$ gives $|Ef_{\tau}| \lesssim |Ef_{\tau}| \ast \zeta$, where $\zeta(x) = \frac{1}{1+|x|^N}$ for some fixed arbitrarily large $N$. Since $\zeta$ is essentially constant at scale one, so is $|Ef_{\tau}| \ast \zeta$. Hence the $L^p$-norm of $|Ef_{\tau}| \ast \zeta$ over any unit cube is comparable to the $L^q$-norm. Picking out the unit cube with maximal contribution to the integral and then invoking Minkowski's inequality to remove the $\zeta$ gives \eqref{uncertainty}. 

 By pigeonholing, there is a subset $Y' \subseteq Y$ consisting of a fraction $\geq \frac{1}{K^{O(1)}}$ of the cubes, such that
\[ \min_{W_1, \dotsc , W_a \in \gr(d-1, d+1)} \max_{\tau \notin W_a \, \forall a}\int_{2B} |Ef_{\tau}|^q, \]
is essentially constant as $B$ ranges over $Y'$. Summing \eqref{dichotomy} over $B \subseteq Y'$ therefore yields
\begin{align*} \int_Y |Ef|^p  &\lesssim K^{O(1)} M^{1- \frac{p}{q}} \left( \sum_{B \subseteq Y'} \min_{W_1, \dotsc , W_a \in \gr(d-1, d+1)} \max_{\tau \notin W_a \, \forall a}\int_{2B} |Ef_{\tau}|^q \right)^{p/q}  \\
&\quad + R^{-N} \|f\|_2^p \\
 &\lesssim K^{O(1)} M^{1- \frac{p}{q}} \| Ef\|_{BL^q_{d,A}(B(0,2R) \cap 2Y)}^p + R^{-N} \|f\|_2^p. \end{align*} 
Applying Lemma \ref{broadweight} and the definition of $k(\alpha,\beta)$ gives
\begin{align*} \|Ef\|_{L^p(Y)} &\leq C_{\epsilon}K^{O(1)}  M^{\frac{1}{p} - \frac{1}{q}} \gamma^{\frac{\lambda}{2}}R^{\frac{\epsilon}{2}}\|f\|_2 \\
&\leq C_{\epsilon}K^{O(1)} M^{\frac{1}{p} - \frac{1}{2}} \gamma^{\frac{1}{2}-\frac{1}{q}+\frac{\lambda}{2}} R^{\alpha \left( \frac{1}{2}- \frac{1}{q} \right) + \frac{\epsilon}{2}}\|f\|_2 \\
&\leq C_{\epsilon} M^{\frac{-1}{d-1}} \gamma^{\frac{1}{d-1}} R^{\frac{\alpha}{2d+k(\alpha,\beta)} + \epsilon} \|f\|_2. \end{align*}
 The change in the exponent of $\gamma$ is permissible since $\frac{1}{2}-\frac{1}{q}+\frac{\lambda}{2} \leq \frac{1}{d-1}$, and since $\gamma \gtrsim \frac{1}{K^{O(1)}}$, which follows from \eqref{fractal} with $r=2K^2$, and $x$ the centre of some cube in $Y$. This proves the theorem in the broad case.

In the narrow case, it may be assumed that all of the cubes are narrow. Using the wave packet decomposition from Proposition \ref{wavepacket} with $K= R^{\delta}$, decompose each $f_{\tau}$ as $f_{\tau} = \sum_{\Box_{\tau}} f_{\Box_{\tau}}$, where the sets $\Box_{\tau}$ form a finitely overlapping cover of physical space, and have dimensions 
\[ \frac{RK^{\delta}}{4K\sqrt{d+1}} \times \dotsm \times \frac{RK^{\delta}}{4K\sqrt{d+1}} \times \frac{RK^{\delta}}{4K^2\sqrt{d+1}} \times \frac{R}{4\sqrt{d+1}}, \]
with short axis in the flat direction in $\tau$, and long axis normal to $\tau$. Correspondingly $f= \sum_{\Box} f_{\Box}$ where each set $\Box$ corresponds to some $\tau$, but the cap is suppressed in the notation. Let $\widetilde{R} = \frac{RK^{\delta}}{K^2}$, let $\widetilde{K} = \widetilde{R}^{\delta}$ and make the inductive assumption that the theorem holds at scale $\widetilde{R}$. For each $\tau$, partition physical space into sets $S$ of dimensions
\[ \frac{\widetilde{K}^2 K}{2} \times \dotsm \times \frac{\widetilde{K}^2 K}{2} \times \frac{\widetilde{K}^2}{2} \times \frac{\widetilde{K}^2 K^2}{2}, \]
again with short axis in the flat direction of $\tau$, and long axis normal to $\tau$. 

For each $\tau$ let $\{\eta_S\}_S$ be a smooth partition of unity with each $\eta_S$ non-negative, $\eta_S \sim 1$ on $S$, essentially supported on $2S \cap N_{K^{2+\delta}}(S)$, with $\widehat{\eta_S}$ supported in a box around the origin of dimensions 
\begin{equation} \label{decouplingdim} K^{-2} \times \dotsm \times K^{-2}\times \widetilde{K}^{-2+\delta} \times K^{-2}, \end{equation} with long axis corresponding to the flat direction in $\tau$. This partition can be obtained by applying the Poisson summation formula at scale one, rescaling by the dimensions in \eqref{decouplingdim}, and then grouping the functions together with scaled lattice points in $S$. 

 For a given set $\Box$, sort the boxes $S$ with $2S \cap \Box \neq \emptyset$ into sets $\mathbb{S}_{\kappa}$ according to the dyadic value $\kappa$ of $\|Ef_{\Box}\|_{L^p(2S)}$. Partition these further into sets $\mathbb{S}_{\kappa,\eta}$, where $\eta$ is a dyadic number corresponding to the number of cubes $B \subseteq Y$ such that $K^{2\delta}B \cap S \neq \emptyset$. Let $Y_{\Box,\kappa,\eta}$ be the union of sets $S$ inside $\mathbb{S}_{\kappa, \eta}$, let
\[ Bx = (Kx_1, \dotsc , Kx_{d-1}, x_d, K^2x_{d+1}), \]
and define
\begin{equation} \label{gammadef} \widetilde{\gamma}=  \widetilde{\gamma}\left(\kappa, \eta, \Box\right) = \sup_{\substack{x \in \mathbb{R}^{d+1} \\
r> \widetilde{K}^2}}  \frac{1}{r^{\alpha}}\int_{B(x,r)} \chi_{U^*A^*B^{-1}A2Y_{\Box,\kappa, \eta}}, \end{equation}
where $U$ is a rotation which fixes the cone and carries $\tau$ to the cap with centre line in the direction $\frac{e_{d+1}+e_d}{\sqrt{2}}$. Define
\[ \eta_{Y_{\Box,\kappa, \eta}} = \sum_{S \subseteq Y_{\Box, \kappa, \eta}} \eta_S, \quad \text{so that} \quad Ef = \sum_{ \kappa,\eta} \sum_{\Box } \eta_{Y_{\Box,\kappa, \eta}} Ef_{\Box}. \]
Since $B$ is narrow, \eqref{dichotomy} becomes
\[ \|Ef\|_{L^p(B)}^p \lesssim \sum_{a=1}^A \int \Bigg| \sum_{\kappa, \eta} \sum_{\substack{\Box_{\tau} \\
 \tau \in V_a \setminus \bigcup_{b=1}^{a-1} V_b}} \eta_{Y_{\Box,\kappa, \eta}} Ef_{\Box} \Bigg|^p. \]

Hence for each cube $B \subseteq Y$, by the triangle inequality, there is a triple $(\eta,\kappa, a)$ independent of $\Box$ but dependent on $B$, such that
\begin{equation} \label{pigeontriangle} \|Ef\|_{L^p(B)} \lesssim K^{O(\delta)} \Bigg\| \sum_{\substack{\Box_{\tau} \\
 \tau \in V_a \setminus \bigcup_{b=1}^{a-1} V_b}} \eta_{Y_{\Box,\kappa,\eta}} Ef_{\Box} \Bigg\|_{L^p(B)} + R^{-N} \|f\|_2. \end{equation}
By pigeonholing, there is a fixed triple $(\eta,\kappa,a)$ independent of $\Box$, such that \eqref{pigeontriangle} holds for a fraction $\gtrsim \frac{1}{K^{O(\delta)}}$ of the cubes in $Y$. Therefore let $Y_{\Box} = Y_{\Box,\kappa, \eta}$ for this choice of $\eta$. By pigeonholing the remaining cubes again, there is a subset $\mathbb{B}$ of sets $\Box$ such that $\|f_{\Box}\|_2$ is essentially constant over $\Box \in \mathbb{B}$, each $Y_{\Box}$ corresponding to $\Box \in \mathbb{B}$ contains $\sim \widetilde{M}$ sets $S$ in $Y_{\Box}$, and the inequality  
\begin{equation} \label{pigeontriangle2} \|Ef\|_{L^p(B)} \lesssim K^{O(\delta)} \Bigg\| \sum_{\substack{\Box_{\tau} \in \mathbb{B} \\
\tau \in V_a \setminus \bigcup_{b=1}^{a-1} V_b }} \eta_{Y_{\Box}} Ef_{\Box} \Bigg\|_{L^p(B)} + R^{-N} \|f\|_2, \end{equation}
holds for a fraction $\gtrsim \frac{1}{K^{O(\delta)}}$ of the cubes $B \subseteq Y$. By further pigeonholing the remaining cubes, there is a dyadic number $\mu$ and a set $Y'$ consisting of a fraction $\gtrsim \frac{1}{K^{O(\delta)}}$ of the cubes in $Y$, such that for each $B \subseteq Y'$ the cube $K^{2\delta}B$ intersects $\sim \mu$ different sets $Y_{\Box}$ as $\Box$ ranges over $\mathbb{B}$, and \eqref{pigeontriangle2} holds for all $B \subseteq Y'$. 

The sum in \eqref{pigeontriangle2} satisfies the conditions for narrow decoupling; for each fixed $\tau$ the sets $\Box_{\tau}$ have (much) larger side lengths than each $B \subseteq Y'$, which means that for each $\tau$ and $B$ there are at most $\sim 1$ sets $\Box_{\tau}$ such that $K^{2\delta}B \cap Y_{\Box} \neq \emptyset$. Moreover, the Fourier transform of $\eta_{Y_{\Box}}Ef_{\Box}$ is supported in a $\sim K^{-2}$ neighbourhood of $2\tau$, since the long direction in the support of $\widehat{\eta_{Y_{\Box}}}$ is in the flat direction of $\tau$. For each $B \subseteq Y'$, Theorem \ref{narrower} therefore gives, 
\begin{align*} \|Ef\|_{L^p(B)} &\lesssim K^{O(\delta)} \Bigg( \sum_{\substack{\Box_{\tau} \in \mathbb{B} \\
\tau \in V_a \setminus \bigcup_{b=1}^{a-1} V_b \\
K^{2\delta}B \cap Y_{\Box} \neq \emptyset}} \left\|\eta_{Y_{\Box}} Ef_{\Box} \right\|_{L^p(2B)}^2 \Bigg)^{1/2} + R^{-N} \|f\|_2 \\
&\lesssim K^{O(\delta)} \mu^{ \frac{1}{2}- \frac{1}{p}} \left( \sum_{\Box \in \mathbb{B}} \left\|\eta_{Y_{\Box}} Ef_{\Box} \right\|_{L^p(2B)}^p \right)^{1/p} + R^{-N} \|f\|_2. \end{align*}
Since the cubes are disjoint and contribute equally, summing over $B \subseteq Y'$ gives

\begin{equation} \label{summation} \|Ef\|_{L^p(Y)} \lesssim K^{O(\delta)} \mu^{\frac{1}{d-1} } \left( \sum_{\Box \in \mathbb{B}} \|Ef_{\Box}\|_{L^p(2Y_{\Box})}^p \right)^{1/p} + R^{-N} \|f\|_2. \end{equation}

To apply Lorentz rescaling to a given summand, assume after a rotation that the cap $\tau$ corresponding to the set $\Box$ has centre line in the direction $\frac{e_{d+1}+e_d}{\sqrt{2}}$ (so that $U=\id$ in \eqref{gammadef}). Use the change of variables
\[ (\eta, |\eta| ) = A^*BA (\xi,|\xi|), \quad g(\eta) = K^{\frac{d-1}{2}}  \left| \frac{d\xi}{d\eta} \right| f_{\Box} (\xi), \]
so that $\|g\|_2 \sim \|f_{\Box}\|_2$ and 
\begin{align} \notag \|Ef_{\Box}\|_{L^p(2Y_{\Box})} &= K^{\frac{d+1}{p}- \frac{(d-1)}{2}}\|Eg\|_{L^p(Z)} \\
\label{tbd}  &\leq C_{\epsilon} K^{\frac{-2}{d-1}}\widetilde{M}^{\frac{-1}{d-1}} \widetilde{\gamma}^{\frac{1}{d-1} } \widetilde{R}^{\frac{\alpha}{2d+k(\alpha,\beta)}+\epsilon} \|f_{\Box}\|_2. \end{align}
To verify this inequality, the set $Z$ is defined by $Z= A^* B^{-1} A 2Y_{\Box}$. It is a union of $\sim \widetilde{M}$ cubes $A^*B^{-1}A2S$ of side length $\widetilde{K}^2$ which are all translates of each other, all contained inside a ball of radius $\widetilde{R}$, and all of which contribute equally to the integral. By selecting out a fraction $\sim_d 1$ of the cubes, it may be assumed that the cubes are disjoint. The definition of $\widetilde{\gamma}$ in \eqref{gammadef} is 
\[ \widetilde{\gamma} = \sup_{\substack{x \in \mathbb{R}^{d+1} \\
r> \widetilde{K}^2}}  \frac{1}{r^{\alpha}}\int_{B(x,r)} \chi_Z. \] Therefore, using the inductive assumption and applying the theorem at scale $\widetilde{R}$ gives the inequality \eqref{tbd}.

By definition of $\mu$,
\begin{align}\notag  M\mu &\lesssim K^{O(\delta)} \sum_{B \subseteq Y'} \sum_{\substack{\Box \in \mathbb{B} \\
 K^{2\delta}B \cap Y_{\Box} \neq \emptyset }} 1 \\
\notag &\leq K^{O(\delta)} \sum_{\Box \in \mathbb{B}} \sum_{S \subseteq Y_{\Box}} \sum_{\substack{B \subseteq Y' \\ K^{2\delta}B \cap S \neq \emptyset}} 1 \\
\label{mubound} &\lesssim K^{O(\delta)} | \mathbb{B} | \widetilde{M} \eta. \end{align}

To bound $\widetilde{\gamma}= \widetilde{\gamma}( \Box, \kappa, \eta)$, fix some $\Box \in \mathbb{B}$, assume without loss of generality that $U= \id$, let $x \in \mathbb{R}^{d+1}$ and $r> \widetilde{K}^2$ be given. Then
\begin{align*} \frac{1}{r^{\alpha}}\int_{B(x,r)} \chi_{A^*B^{-1}A2Y_{\Box}} \, dz &= \frac{1}{r^{\alpha}K^{d+1}} \int_{(A^*BA)B(x,r)} \chi_{2Y_{\Box}} \, dy \\
&\leq \frac{1}{r^{\alpha}K^{d+1}} \sum_{S \subseteq Y_{\Box}} \int_{(A^*BA)B(x,r)} \chi_{2S} \, dy \\
&\lesssim \frac{1}{r^{\alpha}K^{d+1}} \sum_{\substack{S \subseteq Y_{\Box} \\
S \subseteq (A^*BA)B\left(x,4r \sqrt{d+1}\right)}} \widetilde{K}^{2(d+1)}K^{d+1} \\
&\lesssim \frac{K^{O(\delta)}}{\eta r^{\alpha}} \sum_{\substack{B \subseteq Y \\
B \subseteq (A^*BA)B\left(x,K^{10\delta}r\right)}} \widetilde{K}^{2(d+1)} \\
&\leq \frac{K^{O(\delta)}}{\eta r^{\alpha}} \int_{(A^*BA)B(x,K^{10\delta}r)} \chi_Y \, dy \\
&\lesssim \frac{K^{1+ \alpha+ O(\delta)}\gamma}{\eta},  \end{align*}
where the last line follows from covering $(A^*BA)B(x,K^{10\delta}r)$ by $\lesssim K$ balls of radius $K^{1+O(\delta)}r$ and applying \eqref{fractal}. Taking the supremum over $r > \widetilde{K}^2$ gives
\begin{equation} \label{gammabound} \widetilde{\gamma} = \widetilde{\gamma}(\Box,\kappa, \eta) \lesssim K^{1+\alpha+O(\delta)} \gamma \eta^{-1}. \end{equation}

Putting \eqref{tbd},\eqref{mubound} and \eqref{gammabound} into \eqref{summation} yields
\begin{align*} \|Ef\|_{L^p(Y)} &\leq C_{\epsilon}C_{\epsilon}' K^{\frac{\alpha+1}{d-1} -\frac{2}{d-1} - \frac{2\alpha}{2d+k(\alpha,\beta)} -2\epsilon + O(\delta)}M^{\frac{-1}{d-1}} \gamma^{ \frac{1}{d-1}} R^{ \frac{\alpha}{2d+k(\alpha,\beta)}+\epsilon}\|f\|_2 \\
&\leq C_{\epsilon}M^{\frac{-1}{d-1}} \gamma^{ \frac{1}{d-1}} R^{ \frac{\alpha}{2d+k(\alpha,\beta)}+\epsilon}\|f\|_2, \end{align*}
for $R$ large enough, by the definition of $k(\alpha,\beta)$. Therefore the induction closes in the narrow case, and this finishes the proof. \end{proof}

By pigeonholing and Hölder's inequality, Lemma \ref{prop2} implies the following $L^2 \to L^2$ inequality. 

\begin{lemma} \label{omitted} Let $d \geq 3$, $\alpha, \beta, R, \gamma>0$ with
\[ \frac{d+1}{2} < \alpha < d, \quad \beta < \min\left\{ \alpha-1 + \frac{d-\alpha}{d-1} , \beta\big(\alpha, \Gamma^d \big)\right\}. \]
Let $X$ be a union of $\mathbb{Z}^{d+1}$-lattice unit cubes in $B(0,R)$ with
\[ \int_{B(x,r)} \chi_X \leq \gamma r^{\alpha} \quad \text{for all $x \in \mathbb{R}^{d+1}$ and $r>1$. } \]
Then for any $\epsilon >0$,
\begin{equation} \label{fractal2} \left\| Ef \right\|_{L^2(X) } \leq C_{\epsilon} \gamma^{\frac{1}{d-1} }R^{\frac{ \alpha}{2d+k(\alpha,\beta)}+\epsilon} \|f\|_2. \end{equation}
\end{lemma}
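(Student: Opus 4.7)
The plan is to reduce Lemma \ref{omitted} to Lemma \ref{prop2} via three standard maneuvers: thicken $X$ into a union of $K^2$-cubes, pigeonhole so that $\|Ef\|_{L^p(B_k)}$ is essentially constant on the selected cubes, and apply Hölder on each cube to convert $L^p$ to $L^2$. Fix $p = \frac{2(d-1)}{d-3}$ and $K = R^{\delta}$ with $\delta = \delta(\epsilon) \ll \epsilon$ as provided by Lemma \ref{prop2}.

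First, partition $\mathbb{R}^{d+1}$ into a lattice of $K^2$-cubes and let $X^{*}$ be the union of those meeting $X$. Since each such $K^2$-cube meets at least one unit cube of $X$ and each unit cube meets only $O_d(1)$ many $K^2$-cubes, I would verify that $X^{*}$ still obeys the fractal bound
\[ \int_{B(x,r)} \chi_{X^{*}}\,dy \lesssim K^{2(d+1)}\gamma\, r^{\alpha}, \quad x \in \mathbb{R}^{d+1},\ r > K^2. \]
Next, dyadically pigeonhole the $K^2$-cubes $B_k \subseteq X^{*}$ according to $\|Ef\|_{L^p(B_k)}$; after discarding cubes where this quantity is smaller than $R^{-N}\|f\|_2$ (whose total contribution is negligible), one is left with $O(\log R) = R^{O(\delta)}$ dyadic classes. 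Up to this $R^{O(\delta)}$ loss, one may pass to a subset $Y = \bigcup_{k=1}^{M} B_k$ on which $\|Ef\|_{L^p(B_k)}$ is constant up to a factor of $2$. The cubes of $Y$ are automatically translates of each other, and $Y$ inherits the fractal bound on $X^{*}$, so Lemma \ref{prop2} applies and yields
\[ \|Ef\|_{L^p(Y)} \leq C_\epsilon M^{-\frac{1}{d-1}} \bigl(K^{2(d+1)}\gamma\bigr)^{\frac{1}{d-1}} R^{\frac{\alpha}{2d+k(\alpha,\beta)} + \frac{\epsilon}{2}}\|f\|_2. \]

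To finish, Hölder on each $K^2$-cube gives $\|Ef\|_{L^2(B_k)} \leq K^{(d+1)(1-2/p)}\|Ef\|_{L^p(B_k)}$, so summing in $\ell^2$ over the $M$ cubes and using the constancy of $\|Ef\|_{L^p(B_k)}$ produces
\[ \|Ef\|_{L^2(Y)} \leq M^{\frac{1}{2}-\frac{1}{p}}\, K^{(d+1)\bigl(1-\frac{2}{p}\bigr)}\, \|Ef\|_{L^p(Y)}. \]
For $p = \frac{2(d-1)}{d-3}$ a direct computation gives the identity $\frac{1}{2}-\frac{1}{p} = \frac{1}{d-1}$, so combining the two displays the $M$-dependence cancels exactly. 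All surviving $K$ powers (from Hölder and from the inflated fractal constant) amount to $R^{O(\delta)}$, which together with the pigeonholing loss and the trivial step $\|Ef\|_{L^2(X)} \leq \|Ef\|_{L^2(X^{*})}$ are absorbed into $R^{\epsilon}$ by choosing $\delta$ small enough. I do not anticipate any significant obstacle beyond careful bookkeeping of these $R^{O(\delta)}$ losses; the cancellation of the $M$-exponent via $\frac{1}{2}-\frac{1}{p}=\frac{1}{d-1}$ is the only delicate point, and it is precisely what allows an $L^2 \to L^p$ estimate to upgrade to an $L^2 \to L^2$ estimate at the same scaling exponent.
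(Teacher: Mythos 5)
Your argument is correct and is exactly the ``pigeonholing and Hölder'' reduction the paper points to (the paper gives no further details for Lemma~\ref{omitted}). The key points -- thickening $X$ to a union of $K^2$-cubes at the cost of a $K^{O(1)}$ inflation of the fractal constant $\gamma$, dyadic pigeonholing on $\|Ef\|_{L^p(B_k)}$ at a $\log R = R^{O(\delta)}$ loss, and the identity $\tfrac12 - \tfrac1p = \tfrac{1}{d-1}$ for $p = \tfrac{2(d-1)}{d-3}$ that cancels the $M$-exponent against the $M^{-1/(d-1)}$ from Lemma~\ref{prop2} -- are all correctly identified and carried out, and all residual losses are $R^{O(\delta)}$ and hence absorbed into $R^{\epsilon}$ for $\delta$ small enough.
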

Successively iterating the preceding three lemmas results in the following theorem. 
\begin{theorem} For $d \geq 3$ and $\frac{d+1}{2} < \alpha < d$,
\begin{equation} \label{lowerbound} \beta\big(\alpha, \Gamma^d \big) \geq  \alpha-1 + \frac{d-\alpha}{d-1}. \end{equation}
\end{theorem}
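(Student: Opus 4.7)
The plan is to convert Lemma \ref{omitted}, an $L^2 \to L^2$ fractal extension estimate, into a lower bound for $\beta\big(\alpha,\Gamma^d\big)$ via standard duality, and then to bootstrap: the resulting lower bound can be substituted back into Lemma \ref{omitted} through the parameter $k(\alpha,\beta)$ to yield an improved bound, and the iteration converges to the claimed value.

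For the reduction, I invoke the following correspondence, which for the cone is essentially due to Erdo\u{g}an \cite{erdogan2} and parallels the paraboloid argument of Du--Zhang \cite{zhang}: a fractal $L^2$ bound
\[ \|Ef\|_{L^2(X)} \leq C_{\epsilon}\gamma^{1/(d-1)}R^{s+\epsilon}\|f\|_2 \]
valid for every $\alpha$-dimensional union of unit cubes $X\subseteq B(0,R)$ with density constant $\gamma$ implies $\beta\big(\alpha,\Gamma^d\big) \geq \alpha - 2s$. The argument is discretization plus duality: rescale an $\alpha$-dimensional measure $\mu$ to $\mu_R(E) = \mu(E/R)$ on $B(0,R)$, pigeonhole its support onto a union $X$ of unit cubes of comparable density, and use the adjoint identity $E^*\mu_R|_\Gamma = \widehat{\mu_R}|_\Gamma$ together with Cauchy--Schwarz. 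The exponent $1/(d-1)$ on $\gamma$ is precisely what is needed so that the density count absorbs the $R^{\alpha}c_\alpha(\mu)$ factor produced by the rescaling.

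With this reduction in hand, the iteration is short. Let $\beta_0$ be any lower bound for $\beta\big(\alpha,\Gamma^d\big)$ satisfying \eqref{alphabetacond}, for example \eqref{cho}. Applying Lemma \ref{omitted} with $\beta = \beta_n$ gives
\[ \beta\big(\alpha,\Gamma^d\big) \geq \alpha - \frac{2\alpha}{2d + k(\alpha,\beta_n)}, \]
which I feed back in as $\beta_{n+1}$. Since the second branch of $k(\alpha,\beta) = \min\{2(d-\alpha)/(\alpha-1),\, 1/(\alpha-\beta)\}$ is strictly increasing in $\beta$, the sequence $\beta_n$ is strictly increasing whenever the second branch is active, and after finitely many steps passes the threshold $\alpha - (\alpha-1)/(2(d-\alpha))$ at which the two branches are equal. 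For $\beta_n$ above this threshold, which uses the hypothesis $\alpha>(d+1)/2$ to make $2(d-\alpha)/(\alpha-1)$ the smaller branch, $k(\alpha,\beta_n)$ is fixed at $2(d-\alpha)/(\alpha-1)$, and a direct computation gives
\[ 2d + \frac{2(d-\alpha)}{\alpha-1} = \frac{2\alpha(d-1)}{\alpha-1}, \qquad \alpha - \frac{2\alpha}{2d + 2(d-\alpha)/(\alpha-1)} = \alpha - \frac{\alpha-1}{d-1} = \alpha - 1 + \frac{d-\alpha}{d-1}, \]
which establishes \eqref{lowerbound}.

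The main obstacle is the duality step, the translation of the fractal $L^2$ inequality into decay of the conical average. This is by now routine for the sphere and paraboloid and was developed for the two-dimensional cone in \cite{erdogan2}; I would check that the argument transfers to higher-dimensional cones without essential changes, which it does because the only structural feature used is the adjoint identity $E^*\mu|_\Gamma = \widehat{\mu}|_\Gamma$ and the truncation $1\leq|\xi|\leq 2$ keeps $\Gamma^d$ well away from its apex. The subsequent bootstrap and fixed-point identification are then purely algebraic.
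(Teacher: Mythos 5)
Your proposal is essentially correct and reconstructs the paper's strategy, but the bootstrap is organized somewhat differently: the paper runs a contradiction argument, directly plugging $\beta = \beta\big(\alpha,\Gamma^d\big)$ (as a limit from below) into the iteration map $f(x) = \alpha\left(1 - \frac{2}{2d + \frac{1}{\alpha-x}}\right)$ and then invoking the globally attracting fixed point $x^* = \alpha - \frac{\alpha}{d} + \frac{1}{2d}$ (which one checks exceeds $\alpha-1+\frac{d-\alpha}{d-1}$ precisely when $\alpha > \frac{d+1}{2}$) to force $\beta\big(\alpha,\Gamma^d\big) \geq x^*$, a contradiction; whereas you run a forward iteration $\beta_{n+1} = \alpha - \frac{2\alpha}{2d+k(\alpha,\beta_n)}$ from an initial admissible $\beta_0$ and argue it eventually crosses the branch-switch threshold $\alpha - \frac{\alpha-1}{2(d-\alpha)}$, after which one step lands exactly on $\alpha-1+\frac{d-\alpha}{d-1}$. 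The two routes are genuinely equivalent, and both rest on the same unproved-here ingredient — the duality/pigeonholing reduction from the weighted $L^2$ extension estimate of Lemma~\ref{omitted} to a lower bound on $\beta\big(\alpha,\Gamma^d\big)$, which the paper likewise cites (to \cite{du}) rather than reproves; your discussion of why this transfers from the sphere/paraboloid to $\Gamma^d$ is reasonable. Your argument is in one respect a bit more explicit: you split on which branch of $k(\alpha,\beta)=\min\bigl\{\tfrac{2(d-\alpha)}{\alpha-1},\tfrac{1}{\alpha-\beta}\bigr\}$ is active, whereas the paper's remark that the branch $\tfrac{1}{\alpha-\beta}$ is active ``comes from the assumption that \eqref{lowerbound} fails'' is slightly too quick (the negation of \eqref{lowerbound} does not immediately place $\beta\big(\alpha,\Gamma^d\big)$ below the threshold, though if it is above, the first branch directly yields \eqref{lowerbound} and finishes). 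On the other hand, two details you assert without justification do need checking: (i) that the threshold $\alpha - \frac{\alpha-1}{2(d-\alpha)}$ is strictly less than $\alpha-1+\frac{d-\alpha}{d-1}$, so that the lemma's hypothesis $\beta < \alpha-1+\frac{d-\alpha}{d-1}$ remains satisfiable after the threshold is crossed, and (ii) that the forward iteration actually crosses the threshold in finitely many steps — this follows because the second-branch fixed point $x^* = \alpha - \frac{\alpha}{d} + \frac{1}{2d}$ lies strictly above the threshold (both (i) and (ii) reduce to $2\alpha > d+1$). With those elementary verifications supplied, your argument is complete.
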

\begin{proof} For a contradiction, assume that \eqref{lowerbound} does not hold. Successively applying Lemmas \ref{broadweight}, \ref{prop2} and \ref{omitted} with $\beta<\beta\big(\alpha, \Gamma^d \big)$, and then letting $\beta \to \beta\big(\alpha, \Gamma^d \big)$ gives \eqref{fractal2} with $\beta= \beta\big(\alpha, \Gamma^d \big)$. By duality, pigeonholing and the definition of $\beta\big(\alpha, \Gamma^d \big)$ (see for example \cite{du}),
\[ \beta\big(\alpha, \Gamma^d \big) \geq \alpha \left( 1- \frac{2}{2d+k(\alpha,\beta\big(\alpha, \Gamma^d \big))} \right) = \alpha \left( 1- \frac{2}{2d+\frac{1}{\alpha-\beta\big(\alpha, \Gamma^d \big)}} \right), \]
where the equality comes from the assumption that \eqref{lowerbound} fails. The iterated function $f: (0,\alpha) \to (0, \alpha)$ defined by $f(x) = \alpha \left( 1- \frac{2}{2d+ \frac{1}{\alpha-x}} \right)$ is increasing on its domain and has a globally attracting fixed point at $x= \alpha-\frac{\alpha}{d}+\frac{1}{2d}$. Hence
\[ \beta\big(\alpha, \Gamma^d \big) \geq \alpha-\frac{\alpha}{d} + \frac{1}{2d} > \alpha-1 + \frac{d-\alpha}{d-1}, \]
since $\alpha > \frac{d+1}{2}$. This is a contradiction. \end{proof}


\begin{thebibliography}{}

\bibitem{demeter}
Bourgain,~J., Demeter,~C.: The proof of the $l^2$ Decoupling Conjecture.
\newblock Ann.~of Math. \textbf{182}, 351--389 (2015)

\bibitem{cho}
Cho,~C.-H., Ham,~S., Lee,~S.: Fractal Strichartz estimate for the wave equation.
\newblock Nonlinear Anal. \textbf{150}, 61--75 (2017)

\bibitem{du}
Du,~X., Guth,~L., Ou,~Y., Wang,~H., Wilson,~B., Zhang,~R.: Weighted restriction estimates and application to Falconer distance set problem.
\newblock arXiv:1802.10186v1 (2018)

\bibitem{zhang}
Du,~X., Zhang,~R.: Sharp $L^2$ estimates of the Schrödinger maximal function in higher dimensions.
\newblock Ann.~of Math. \textbf{189}, 837--861 (2019)

\bibitem{erdogan2}
Erdoğan,~M.~B.: A note on the Fourier transform of fractal measures.
\newblock Math.~Res.~Lett. \textbf{11}, 299--313 (2004)

\bibitem{guthlecture}
Guth,~L.: 18.118: Decoupling, Lecture 7. 
\newblock \href{http://math.mit.edu/\%7Elguth/Math118.html}{http://math.mit.edu/\%7Elguth/Math118.html} (2017)

\bibitem{guth2}
Guth,~L.: Restriction estimates using polynomial partioning II.
\newblock Acta Math. \textbf{221}, 81--142 (2018)

\bibitem{me}
Harris,~T.~L.~J.: Refined Strichartz inequalities for the wave equation.
\newblock arXiv:1805.07146v3 (2018)

\bibitem{iosevich}
Iosevich,~A., Liflyand,~E.: Decay of the Fourier Transform.
\newblock Springer Basel, Heidelberg (2014)

\bibitem{liu}
Liu,~B.: An $L^2$ identity and pinned distance problem.
\newblock arXiv:1802.00350v2 (2018)

\bibitem{matilla}
Mattila,~P.: Spherical averages of Fourier transforms of measures with finite energy; dimension
of intersections and distance sets.
\newblock Mathematika \textbf{34} 207--228 (1987)

\bibitem{oberlin3}
Oberlin,~D.~M., Oberlin,~R.: Application of a Fourier restriction theorem to certain families of projections in $\mathbb{R}^3$.
\newblock J.~Geom.~Anal. \textbf{25}, 1476--1491 (2015)

\bibitem{ou}
Ou,~Y., Wang,~H.: A cone restriction estimate using polynomial partitioning.
\newblock arXiv:1704.05485v1 (2017)

\bibitem{wang}
Ou,~Y., Wang,~H.: Private communication. (2018)

\bibitem{sjolin}
Sjölin,~P.: Estimates of spherical averages of Fourier transforms and dimensions of sets.
\newblock Mathematika \textbf{40}, 322--330 (1993)

\bibitem{taoparkcity}
Tao,~T.: Recent progress on the restriction conjecture.
\newblock arXiv:math/0311181v1 (2003)

\bibitem{wolff}
Wolff,~T.: Local smoothing type estimates on $L^p$ for large $p$.
\newblock Geom. Funct. Anal. \textbf{10}, 1237--1288 (2000)

\end{thebibliography}
\end{document}